\newtheorem*{rep@theorem}{\rep@title}
\newcommand{\newreptheorem}[2]{
\newenvironment{rep#1}[1]{
 \def\rep@title{#2 \ref{##1}}
 \begin{rep@theorem}}
 {\end{rep@theorem}}}
\theoremstyle{plain}
\newtheorem{thm}{Theorem}[section]
\newtheorem{lem}[thm]{Lemma}
\newtheorem{cor}[thm]{Corollary}
\theoremstyle{definition}
\theoremstyle{remark}
\newcommand{\fancy}[1]{\mathcal{#1}}
\newcommand{\IN}{\mathbb{N}}
\newcommand{\IR}{\mathbb{R}}
\newcommand{\T}{\fancy{T}}
\newcommand{\B}{\fancy{B}}
\renewcommand{\L}{\fancy{L}}
\newcommand{\HH}{\fancy{H}}
\newcommand{\set}[1]{\left\{ #1 \right\}}
\newcommand{\card}[1]{\left|#1\right|}
\newcommand{\size}[1]{\left\Vert#1\right\Vert}
\newcommand{\ceil}[1]{\left\lceil#1\right\rceil}
\newcommand{\func}[3]{#1\colon #2 \rightarrow #3}
\newcommand{\irange}[1]{\left[#1\right]}
\newcommand{\parens}[1]{\left( #1 \right)}
\newcommand{\DefinedAs}{\mathrel{\mathop:}=}
\newcommand{\AT}{\operatorname{AT}}
\newcommand{\ch}{\operatorname{ch}}
\newcommand\restr[2]{{
  \left.\kern-\nulldelimiterspace 
  #1 
  \vphantom{\big|} 
  \right|_{#2} 
  }}
\def\chil{{\chi_\ell}}
\def\chiol{\chi_{\rm{OL}}}
\newcommand{\aside}[1]{\marginnote{\scriptsize{#1}}[0cm]}
\newcommand{\aaside}[2]{\marginnote{\scriptsize{#1}}[#2]}
\title{Edge Lower Bounds for List Critical Graphs,\\ via Discharging}
\author{Daniel W. Cranston and Landon Rabern}
\author{Daniel W. Cranston\thanks{Department of Mathematics and Applied
Mathematics, Viriginia Commonwealth University, Richmond, VA;
\texttt{dcranston@vcu.edu}; 
Research of the first author is partially supported by NSA Grant
H98230-15-1-0013.}
\and
Landon Rabern\thanks{LBD Data Solutions, Lancaster, PA;
\texttt{landon.rabern@gmail.com}}
	}
\begin{document}
\maketitle
\begin{abstract}
A graph $G$ is $k$-critical if $G$ is not $(k-1)$-colorable, but every proper
subgraph of $G$ is $(k-1)$-colorable.
A graph $G$ is $k$-choosable if $G$ has an $L$-coloring from every list
assignment $L$ with $|L(v)|=k$ for all $v$, and a graph $G$ is
\emph{$k$-list-critical} if $G$ is not $(k-1)$-choosable, but every
proper subgraph of $G$ is $(k-1)$-choosable.  The problem of bounding (from below)
the number of edges in a $k$-critical graph has been widely studied, starting
with work of Gallai and culminating with the seminal results of Kostochka and
Yancey, who essentially solved the problem.  In this paper, we improve the best
lower bound on the number of edges in a $k$-list-critical graph.  Our proof
uses the discharging method, which makes it simpler and more modular than previous
work in this area.
\end{abstract}

\section{Introduction}
A $k$-coloring of a graph $G$ assigns to each vertex of $G$ a color from
$\{1,\ldots,k\}$ such that adjacent vertices get distinct colors.  A graph $G$ is
\emph{$k$-colorable} if it has a $k$-coloring and its chromatic number,
$\chi(G)$, is the least integer $t$ such that $G$ is $t$-colorable.  
Further, $G$ is \emph{$k$-critical} when $\chi(G)=k$ and every proper subgraph
$H$ of $G$ has $\chi(H)<k$.  For a graph $G$ with $\chi(G)=k$, every minimal
subgraph $H$ such that $\chi(H)=k$ must be $k$-critical.  As a result, many
questions about the chromatic number of a graph can be reduced to corresponding
questions about $k$-critical graphs.  One natural question is how few edges an
$n$-vertex $k$-critical graph $G$ can have?  Since $\delta(G)\ge k-1$, clearly
$2||G|| \ge (k-1)|G|$.  Brooks' theorem shows that if $G$ is a connected graph,
other than $K_k$, then this bound can be slightly improved.  Dirac proved that
every $k$-critical graph $G$ satisfies 
$$
2||G|| \ge (k-1)|G|+k-3.
$$
Now let 
$$
g_k(n,c) = k-1 + \frac{k-3}{(k-c)(k-1)+k-3}n.
$$
For $k\ge 4$ and $|G|\ge k+2$, Gallai improved Dirac's bound to $2||G|| \ge
g_k(|G|,0)$.  This result was subsequently strengthened by
Krivelevich~\cite{krivelevich1997minimal} to $2||G||\ge g_k(|G|,2)$ and by Kostochka and
Stiebitz~\cite{kostochkastiebitzedgesincriticalgraph}, for $k\ge 6$, to $2||G||
\ge g_k(|G|,(k-5)\alpha_k)$, where $\alpha_k = \frac12-\frac1{(k-1)(k-2)}$.
In a recent breakthrough, Kostochka and Yancey~\cite{kostochkayancey2012ore}
proved that every $k$-critical graph $G$ satisfies
$$
||G|| \ge \ceil{\frac{(k+1)(k-2)|G|-k(k-3)}{2(k-1)}}.
$$
This bound is tight for $k=4$ and $|G|\ge 6$.  Also, for each $k\ge 5$, it is tight
for infinitely many values of $|G|$.

This result of Kostochka and Yancey has numerous applications to coloring problems.
For example, it gives a short proof of Gr\"otzsch's theorem
\cite{kostochka2012oregrotsch}, that every
triangle-free planar graph is 3-colorable.  It also 
 yields short proofs of a series of results on coloring with
respect to Ore degree \cite{kierstead2009ore, rabern2010a, krs_one}. 
Thus,
it is natural to consider the same question for more general types of coloring,
such as list coloring, online list coloring, and Alon--Tarsi number (all of
which are defined below).
Gallai's bound
\cite{gallai1963kritische} also holds for list coloring, as well as online list coloring
(\cite{kostochkastiebitzedgesincriticalgraph, riasat2012critically}). 
In contrast,
Krivelevich's proof \cite{krivelevich1997minimal} does not work for list
coloring, since it uses a lemma of Stiebitz \cite{stiebitz1982proof}, which says that
in a color-critical graph, the subgraph induced by vertices of degree at least
$k$ has no more components than the subgraph induced by vertices of degree
$k-1$ (but no analogous lemma is known for list coloring). For list coloring Kostochka and Stiebitz
\cite{kostochkastiebitzedgesincriticalgraph} gave the first improvement over
Gallai's bound. 
Table \ref{tab:1}, at the end of this section, gives the values of these bounds
for small $k$. 

Recently, Kierstead and the
second author \cite{OreVizing} further improved the lower bound and extended it
to online list coloring as well as to the Alon-Tarsi number.  Their proof combined
a global averaging argument from Kostochka and Stiebitz
\cite{kostochkastiebitzedgesincriticalgraph} with improved reducibility lemmas.
 Here we use these same reducibility lemmas, but replace the global averaging
argument with a discharging argument.  The discharging argument is more
intuitive and will be easier to modify in the future for use with new
reducibility lemmas.  The improvement in our lower bound on the number of edges
in a list critical graph comes from an improved upper bound on the average
degree of Gallai trees.  To state our results we need some definitions.

List coloring was introduced by Vizing~\cite{vizing1976} and independently by
Erd\H{o}s,
Rubin, and Taylor~\cite{erdos1979choosability}.
A \emph{list assignment} $L$ assigns to each vertex $v$ of a graph $G$ a set of
allowable colors.  An \emph{$L$-coloring}\aside{$L$-coloring} is a proper
coloring $\varphi$ of $G$ such that $\varphi(v)\in L(v)$ for all $v$.  An
\emph{$f$-assignment} is a list assignment $L$ such that $|L(v)|=f(v)$ for all
$v$; a \emph{$k$-assignment} is an $f$-assignment such that $f(v)=k$ for all
$v$; a \emph{$d_0$-assignment} is an $f$-assignment such that $f(v)=d(v)$ for
all $v$.
A graph $G$ is \emph{$k$-choosable} (resp.~\emph{$f$-choosable}
\aaside{$f$-choosable}{.4cm} or
\emph{$d_0$-choosable}\aaside{$d_0$-choosable}{-.4cm}) if $G$
is $L$-colorable whenever $L$ is a $k$-assignment (resp.~$f$-assignment or
$d_0$-assignment).  The \emph{list chromatic number}, $\chil(G)$, of $G$ is the
least integer $t$ such that $G$ is $t$-choosable.  

Online list coloring allows for the possibility that the lists are being
revealed as the graph is being colored.  This notion was introduced 
independently by Zhu~\cite{zhu2009online} and Schauz~\cite{schauz2009mr} (who
called it \emph{paintability}).  A graph $G$ is \emph{online $f$-list
colorable} if either (i) $G$ is an independent
set and $f(v)\ge 1$ for all $v$ or else (ii) for every subset $S\subseteq V(G)$,
there exists an independent set $I\subseteq S$ such that $G-I$ is online
$f'$-list colorable, where $f'(v)=f(v)-1$ for all $v\in S\setminus I$ and
$f'(v)=f(v)$ for all $v\in V(G)\setminus S$.  The online list chromatic number, 
$\chiol(G)$, of $G$ is the least $k$ such that $G$ is online $f$-list colorable
when $f(v)=k$.  If $\chiol(G)\le k$, then $G$ is \emph{online $k$-list
colorable}.  Note that if $G$ is online $k$-list colorable, then $G$ is
$k$-choosable. (Given $L$, we can take $S$ to be, successively, 
$\{v:i\in L(v)\}$, as $i$ ranges through all elements of $\cup_{v\in V}L(v)$.)

A characterization of connected graphs that are not $d_0$-choosable was first
given by Vizing~\cite{vizing1976}, and later by Erd\H{o}s, Rubin, and
Taylor~\cite{erdos1979choosability}.  Such graphs are called Gallai trees;
they are precisely the connected graphs in which each block is a complete graph
or an odd cycle.  Hladk{\`y}, Kr{\'a}l, and Schauz~\cite{Hladky} later
characterized the connected
graphs that are not online $d_0$-choosable; again, these are precisely the
Gallai trees.

Given a graph $G$ and a list assignment $L$, a natural way to construct an
$L$-coloring of $G$ is to color $G$ greedily in some order.  This approach will
always succeed when $|L(v)|\ge d(v)+1$ for all $v$.  Using digraphs, we can
state a weaker sufficient condition.  For a vertex ordering $\sigma$, form an
acyclic digraph $D$ by directing each edge $v_iv_j$ as $v_i \to v_j$ if $v_j$
precedes $v_i$ in $\sigma$.  Now it suffices to have $|L(v)|\ge d^+_D(v)+1$. 
Alon and Tarsi strengthened this result significantly, by
allowing certain directed cycles in $D$.  To state their result, we need a few
definitions.  A digraph $D$ is \emph{eulerian} if $d^+_D(v)=d^-_D(v)$ for all
$v$.  A digraph $D$ is \emph{even} if $||D||$ is even, and otherwise $D$ is
\emph{odd}.  For a digraph $D$, let $EE(D)$ and $EO(D)$ denote the number of
even (resp.~odd) spanning eulerian subdigraphs of $D$.  A graph $G$ is
\emph{$f$-Alon--Tarsi} (\emph{$f$-AT}\aside{$f$-AT}, for short) if $G$ has an
orientation $D$ such that $EE(D)\ne EO(D)$ and $f(v)\ge d^+_D(v)+1$ for all
$v$.  The Alon--Tarsi number, $AT(G)$, of $G$ is the least $k$ such that $G$ is
$f$-AT when $f(v)=k$ for all $v$.  Analogous to the definition for coloring and
list-coloring, a graph $G$ is \emph{$k$-AT-critical}\aside{$k$-AT-critical} if
$\AT(G)=k$ and $\AT(H)<k$ for every proper subgraph $H$ of $G$.  From the
definitions, it is easy to check that always $\chi(G)\le \chil(G)\le
\chiol(G)\le AT(G)\le \Delta(G)+1$.
Alon and Tarsi~\cite{Alon1992125} proved the following.
 
\begin{lem}\label{AlonTarsi}
If a graph $G$ is $f$-AT for $\func{f}{V(G)}{\IN}$, then $G$ is $f$-choosable.
\end{lem}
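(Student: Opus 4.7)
The plan is to deduce the lemma from the Combinatorial Nullstellensatz applied to the \emph{graph polynomial}. Fix an ordering $v_1,\ldots,v_n$ of $V(G)$ and set
$$
P_G(x_1,\ldots,x_n) \DefinedAs \prod_{v_iv_j \in E(G),\, i<j}(x_j - x_i).
$$
An $L$-coloring of $G$ is exactly an assignment $x_i = c_i \in L(v_i)$ with $P_G(c_1,\ldots,c_n)\neq 0$, so it suffices to show that $P_G$ does not vanish on $L(v_1)\times\cdots\times L(v_n)$ for every $f$-assignment $L$. By Alon's Combinatorial Nullstellensatz, this holds whenever $P_G$ has a monomial $\prod_i x_i^{t_i}$ with nonzero coefficient and $t_i \le f(v_i)-1$ for every $i$. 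The natural exponents come from the given orientation $D$: take $t_i = d^+_D(v_i)$, which satisfies $t_i \le f(v_i)-1$ by hypothesis. The entire task reduces to showing that the coefficient of $\prod_i x_i^{d^+_D(v_i)}$ in $P_G$ is nonzero.

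To compute this coefficient, I would expand $P_G$ by choosing one of the two terms in each factor $(x_j - x_i)$. Each such choice determines an orientation $D'$ of $G$ (say, picking $x_j$ means orient $v_iv_j$ as $v_i\to v_j$, while picking $-x_i$ reverses this), and the exponent of $x_v$ in the resulting monomial is exactly $d^+_{D'}(v)$. Thus the coefficient of interest is a signed sum over orientations $D'$ of $G$ having the same out-degree sequence as $D$. For such a $D'$, let $H$ be the set of edges on which $D'$ disagrees with $D$, oriented as in $D$. The identity $d^+_{D'}(v) = d^+_D(v)$ at every $v$ forces, at each $v$, the number of out-edges of $D$ at $v$ that get reversed to equal the number of in-edges that get reversed; hence $d^+_H(v)=d^-_H(v)$ and $H$ is a spanning eulerian subdigraph of $D$. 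The map $D' \mapsto H$ is a bijection between the relevant orientations and spanning eulerian subdigraphs of $D$, so the coefficient equals, up to a global sign, $\sum_H (-1)^{\|H\|} = EE(D)-EO(D)$, which is nonzero by hypothesis.

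The main obstacle will be the sign bookkeeping in this last step. The reference orientation implicit in $P_G$ (each edge pointing from lower to higher index in the ordering $v_1,\ldots,v_n$) is generally different from $D$, so one must translate ``which term was chosen from each factor of $P_G$'' into ``which edges of $D$ got reversed,'' while tracking the accumulated sign. Writing $D_0$ for the reference orientation and noting that the sign of $D'$ in the expansion is $(-1)^{|D_0 \triangle D'|}$, the parity decomposes as $|D_0 \triangle D| + \|H\| \pmod 2$, so the first summand becomes a single global $\pm$ depending only on $D$ and the ordering. Once this parity computation is cleanly recorded, the theorem follows immediately: the coefficient $[\prod_i x_i^{d^+_D(v_i)}]P_G = \pm(EE(D)-EO(D))$ is nonzero, and the Combinatorial Nullstellensatz produces an $L$-coloring.
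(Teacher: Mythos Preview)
Your argument is correct and is essentially the original Alon--Tarsi proof via the Combinatorial Nullstellensatz. The paper, however, does not prove this lemma at all: it simply states it and attributes it to Alon and Tarsi~\cite{Alon1992125}. So there is nothing to compare against in the paper; you have supplied the classical proof where the paper only cites it.

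Two small points worth tightening. First, the form of the Combinatorial Nullstellensatz you invoke requires that the monomial $\prod_i x_i^{t_i}$ have total degree equal to $\deg P_G$; you should note explicitly that $\sum_i d^+_D(v_i) = \|G\| = \deg P_G$, so this hypothesis is met. Second, your stated orientation convention (``picking $x_j$ means orient $v_iv_j$ as $v_i\to v_j$'') makes the exponent of $x_v$ equal to $d^-_{D'}(v)$ rather than $d^+_{D'}(v)$; either flip the convention or work with in-degrees throughout. Neither issue affects the substance of the argument, since the conclusion only needs the coefficient to be nonzero and you correctly identify it as $\pm(EE(D)-EO(D))$.
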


Note that always $EE(D)\ge 1$, since the edgeless spanning subgraph is even. 
If $D$ is acyclic, then $EO(D)=0$, since every subgraph with edges has a vertex
$v$ with $d^+(v)\ge 1 > 0 = d^-(v)$.  Thus, Lemma~\ref{AlonTarsi} generalizes the
results we can prove by greedy coloring.  Schauz \cite{schauz2010flexible} gave
a new, constructive proof of this lemma (the original was non-constructive),
which allowed him to extend the result to online $f$-choosability.

\begin{lem}\label{Schauz}
If a graph $G$ is $f$-AT for $\func{f}{V(G)}{\IN}$, then $G$ is online $f$-choosable.
\end{lem}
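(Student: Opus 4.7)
The plan is to prove Lemma~\ref{Schauz} by strong induction on $|V(G)|$, mirroring the recursive structure in the definition of online $f$-choosability. The base case is when $G$ has no edges: then $f(v)\ge 1$ for all $v$, and $G$ is trivially online $f$-list colorable by independently assigning each vertex any admissible color.

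For the inductive step, suppose $G$ is $f$-AT, witnessed by an orientation $D$ with $d^+_D(v)\le f(v)-1$ and $EE(D)\ne EO(D)$. Given $S\subseteq V(G)$, I must exhibit an independent set $I\subseteq S$ such that $G-I$ is online $f'$-choosable, where $f'(v)=f(v)-1$ for $v\in S\setminus I$ and $f'(v)=f(v)$ otherwise. By induction, it is enough to show that $G-I$ is $f'$-AT. Hence the whole argument reduces to the following claim: for every orientation $D$ of $G$ with $EE(D)\ne EO(D)$ and every $S\subseteq V(G)$, there exist an independent set $I\subseteq S$ and an orientation $D'$ of $G-I$ such that $EE(D')\ne EO(D')$, $d^+_{D'}(v)\le d^+_D(v)-1$ for $v\in S\setminus I$, and $d^+_{D'}(v)\le d^+_D(v)$ for $v\notin S\cup I$. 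Once this claim is established, Lemma~\ref{AlonTarsi}'s orientation framework immediately gives the desired $f'$-AT witness and the induction closes.

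To establish the claim, I would run a signed-counting argument over spanning eulerian subdigraphs of $D$. For each independent set $I\subseteq S$ and each subset $R$ of the edges of $D$ whose reversal forces every $v\in S\setminus I$ to lose at least one out-edge, form the orientation $D_{I,R}$ of $G-I$ by deleting $I$ and reversing the edges of $R$. I would then set up a sign-preserving bijection between spanning eulerian subdigraphs $H$ of $D$ and triples $(I,R,H')$, where $(I,R)$ is canonically extracted from $H$ via a greedy ``kernel-style'' procedure on $D[S]$ (recursively pick sinks of $D[S]$ that appear in $V(H)$ to form $I_H$, and let $R_H$ record the edges whose status is toggled). This should telescope to $\sum_{I,R}\pm\bigl(EE(D_{I,R})-EO(D_{I,R})\bigr)=\pm\bigl(EE(D)-EO(D)\bigr)\ne 0$, forcing some pair $(I^*,R^*)$ to contribute a nonzero term, so that $D'\DefinedAs D_{I^*,R^*}$ is the required orientation. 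The main obstacle is precisely the parity bookkeeping in this bijection: each reversal in $R$ must flip the sign of the ambient eulerian subdigraph in a way that matches the parity change induced by restricting $H$ to $G-I_H$, so that terms across different $(I,R)$ add rather than cancel. Getting the canonical choice of $I_H$ to align with these sign conventions — and verifying that the out-degree constraints on $v\in S\setminus I$ are automatically enforced by the nonvanishing of a term — is where the delicate combinatorial work lies.
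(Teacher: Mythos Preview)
The paper does not prove this lemma; it is merely stated, with attribution to Schauz, and no argument is given. So there is no ``paper's own proof'' to compare against here.

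Your reduction is the right one: by induction it suffices to show that whenever $D$ witnesses that $G$ is $f$-AT and $S\subseteq V(G)$ is given, there exist an independent $I\subseteq S$ and an orientation $D'$ of $G-I$ with $EE(D')\ne EO(D')$ and the stated out-degree drops on $S\setminus I$. That is precisely the content of Schauz's theorem.

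The gap is that your bijection is not actually constructed. As written, the rule for extracting $I_H$ from $H$ does not make sense: $H$ is a \emph{spanning} eulerian subdigraph, so every vertex ``appears in $V(H)$'', and hence ``sinks of $D[S]$ that appear in $V(H)$'' depends only on $D$ and $S$, not on $H$. You have not specified what $R_H$ is, how the map $(I,R,H')\mapsto H$ is defined (so that the claimed bijection can be inverted), or why the parity of $H'$ in $D_{I,R}$ matches that of $H$ in $D$. The sentence ``this should telescope to $\ldots\ne 0$'' is the desired conclusion, not an argument. You yourself flag the sign bookkeeping as ``the main obstacle'' and leave it undone; as it stands, the proposal is an outline with the crucial step missing. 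For context, Schauz's actual proof does not proceed via a bijection on eulerian subdigraphs at all: it goes through the graph polynomial $\prod_{u\to v}(x_u-x_v)$ and his quantitative Coefficient Formula (a sharpening of the Combinatorial Nullstellensatz), from which the required $I$ is read off algebraically.
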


\begin{table}
	\begin{center}
		\begin{tabular}{|c|c|c|c|c|c|c|c|}
			\hline
			&\multicolumn{4}{ |c| }{$k$-Critical
$G$}&\multicolumn{3}{|c|}{$k$-List Critical $G$}\\
			\hline
			& Gallai \cite{gallai1963kritische}
			& Kriv \cite{krivelevich1997minimal}
			& KS \cite{kostochkastiebitzedgesincriticalgraph}
			& KY \cite{kostochkayancey2012ore}
			& KS \cite{kostochkastiebitzedgesincriticalgraph} 
			& KR \cite{OreVizing}
			& Here\\
			$k$ & $d(G) \ge$ & $d(G) \ge$ & $d(G) \ge$ & $d(G) \ge$ & $d(G) \ge$ & $d(G) \ge$ & $d(G) \ge$\\
			\hline 
			4 & 3.0769 &3.1429&---&3.3333& --- & --- & \bf{---}\\
			5 & 4.0909 &4.1429&---&4.5000& --- & 4.0984 & \bf{4.1000}\\
			6 & 5.0909 &5.1304&5.0976&5.6000& --- & 5.1053 & \bf{5.1076}\\
			7 & 6.0870 &6.1176&6.0990&6.6667& --- & 6.1149 & \bf{6.1192}\\
			8 & 7.0820 &7.1064&7.0980&7.7143& --- & 7.1128 & \bf{7.1167}\\
			9 & 8.0769 &8.0968&8.0959&8.7500& 8.0838 & 8.1094 & \bf{8.1130}\\
			10 & 9.0722 &9.0886&9.0932&9.7778& 9.0793 & 9.1055 & \bf{9.1088}\\
			15 & 14.0541 &14.0618&14.0785&14.8571& 14.0610 & 14.0864 & \bf{14.0884}\\
			20 & 19.0428 &19.0474&19.0666&19.8947& 19.0490 & 19.0719 & \bf{19.0733}\\
			\hline
		\end{tabular}
	\end{center}
	\caption{History of lower bounds on the average degree $d(G)$ of
$k$-critical and $k$-list-critical graphs $G$. (Reproduced from~\cite{OreVizing}
and updated.)}
	\label{tab:1}
\end{table}

In this paper, we prove lower bounds on the number of edges in a $k$-AT-critical
graph.  Corollaries~\ref{MainCor} and~\ref{MinorCor} summarize our main results.
Here $d(G)$ denotes the average degree of $G$.

\begin{repcor}{MainCor}
If $G$ is a $k$-AT-critical graph, with $k\ge 7$, and $G\ne K_k$, then
\[d(G) \ge k-1 + \frac{(k-3)(2k-5)}{k^3 + k^2 - 15k + 15}.\]
\end{repcor}

\begin{repcor}{MinorCor}
If $G$ is a $k$-AT-critical graph, with $k\in\{5,6\}$, and $G\ne K_k$, then
\[d(G) \ge k-1 + \frac{(k-3)(2k-5)}{k^3 + 2k^2 - 18k + 15}.\]
\end{repcor}

\section{Gallai's bound via discharging}
\label{sec:gallai}

One of the earliest results bounding the number of edges in a critical graph is
the following theorem, due to Gallai.  The key lemma he proved,
Lemma~\ref{BasicGallaiTreeBound}, gives an upper bound on the number of edges
in a Gallai tree.  The rest of his proof is an easy
counting argument.  As a warmup, and to illustrate the approach that we take in
Section~\ref{discharging}, we rephrase this counting in terms of
discharging.  A \emph{Gallai tree} is a connected graph in which each block is a
clique or an odd cycle (these are precisely the $d_0$-choosable graphs,
mentioned in the introduction).  Let $\T_k$ denote the set of all Gallai
trees of degree at most $k-1$, excluding $K_k$. A
\emph{$k$-vertex}\aside{$k$-vertex} (resp.~$k^+$-vertex or $k^-$-vertex) is a
vertex of degree $k$ (resp.~at least $k$ or at most $k$).  A
\emph{$k$-neighbor}\aside{$k$-neighbor} of a vertex $v$ is an adjacent $k$-vertex.
For convenience, we write $d(G)$\aaside{$d(G)$}{.5cm} to denote the average degree of $G$.

\begin{thm}[Gallai]
\label{thm:Gallai}
	For $k \ge 4$ and $G$ a $k$-AT-critical graph, with $G \ne K_k$, we have
	\[d(G) > k-1 + \frac{k-3}{k^2-3}.\]
\end{thm}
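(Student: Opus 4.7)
The plan is a discharging proof with initial charge $\mu(v) := d(v)$, target charge $k-1+\epsilon$ where $\epsilon := (k-3)/(k^2-3)$, and a rule that preserves the total. A standard greedy-orientation argument first shows $\delta(G) \ge k-1$ for any $k$-AT-critical graph, so I partition $V(G) = L \cup H$ with $L := \{v : d(v) = k-1\}$ and $H := \{v : d(v) \ge k\}$; a Brooks-type argument for AT further forces $H \ne \emptyset$.

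First I establish the standard structural claim that every component $T$ of $G[L]$ lies in $\T_k$. Clearly $\Delta(T) \le k-1$ and $T \ne K_k$. If some $T$ were not a Gallai tree, then by the AT-analogue of the $d_0$-choosability / online-$d_0$-choosability classification, $T$ would be $d_0$-AT. Combining a $(k-1)$-AT orientation of $G - V(T)$ (which exists by criticality) with such an orientation of $T$, and directing all edges between $T$ and $V(G) \setminus V(T)$ out of $T$, produces an orientation $D$ of $G$ with all out-degrees at most $k-2$. The cross edges force every eulerian subdigraph of $D$ to use no cross edges (any vertex of $V(G) \setminus V(T)$ receiving a cross edge would be imbalanced), so $EE(D)-EO(D)$ factors as the product of the corresponding differences on the two parts and is nonzero. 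Hence $G$ would be $(k-1)$-AT, contradicting criticality.

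Next I invoke Lemma~\ref{BasicGallaiTreeBound}, rephrased as $\mathrm{def}(T) := \sum_{v \in T}(k-1-d_T(v)) \ge \frac{k-3}{k-1}|T| + 2$ for every $T \in \T_k$; when $T$ is a component of $G[L]$, $\mathrm{def}(T)$ equals the number of edges from $T$ to $H$. The discharging rule is: each edge $uv$ with $u \in H$, $v \in L$ sends $\alpha := (k-1)/(k^2-3)$ from $u$ to $v$. The two identities $\alpha k = 1 - \epsilon$ and $\alpha(k-3) = \epsilon(k-1)$, both restatements of $k(k-1)+(k-3) = k^2-3$, drive the verification: each high vertex has final charge $\ge d(v)(1-\alpha) \ge k(1-\alpha) = k-1+\epsilon$, and each component $T$ of $G[L]$ has total final charge $(k-1)|T| + \alpha \cdot \mathrm{def}(T) \ge (k-1+\epsilon)|T| + 2\alpha$, a surplus of $2\alpha > 0$ per component.

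Since discharging preserves the total, $2\|G\| \ge (k-1+\epsilon)|G| + 2\alpha c$, where $c$ is the number of components of $G[L]$. If $c \ge 1$ this is strict; if $L = \emptyset$ then every vertex is high and $\sum d(v) \ge k|G| > (k-1+\epsilon)|G|$ directly. Either way, $d(G) > k-1+\epsilon$. The hardest step is the structural reduction: only the online-list version of the classification of non-$d_0$-choosable graphs is cited explicitly in the excerpt, so the argument also needs its AT-analogue together with the eulerian-subdigraph factorization above. Once these are in place, the discharging constants $\alpha$ and $\epsilon$ are algebraically forced by $k(k-1)+(k-3)=k^2-3$, and the calculation is very short.
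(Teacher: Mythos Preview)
Your proof is correct and follows essentially the same discharging argument as the paper: identical initial charges, the same transfer of $\alpha=(k-1)/(k^2-3)$ along each high--low edge, and the same appeal to Lemma~\ref{BasicGallaiTreeBound} to finish. The only substantive addition is that you spell out why each component of $G[L]$ lies in $\T_k$ for $k$-AT-critical $G$ (via a $d_0$-AT classification and the factorisation of $EE-EO$ across a one-way edge cut), which the paper takes for granted; one small imprecision is that Lemma~\ref{BasicGallaiTreeBound} as stated only gives $\mathrm{def}(T)>\frac{k-3}{k-1}|T|$, not your ``$\ge\frac{k-3}{k-1}|T|+2$'' (the latter needs the refinement the paper mentions just after the lemma), but the strict inequality already suffices for your conclusion.
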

\begin{proof}
We use the discharging method. Each vertex $v$ has initial charge $d_G(v)$. 
First, each $k^+$-vertex gives charge $\frac{k-1}{k^2-3}$ to each of its
$(k-1)$-neighbors.  Now the vertices in each component of the subgraph induced
by $(k-1)$-vertices share their total charge equally.  Let $\ch^*(v)$ denote the
resulting charge on $v$.  We finish the proof by showing that $\ch^*(v) \ge k-1
+ \frac{k-3}{k^2-3}$ for all $v \in V(G)$.
	
If $v$ is a $k^+$-vertex, then $ch^*(v) \ge d_G(v) - \frac{k-1}{k^2-3}d_G(v) =
\parens{1- \frac{k-1}{k^2-3}}d_G(v) \ge \parens{1- \frac{k-1}{k^2-3}}k = k-1 +
\frac{k-3}{k^2-3}$ as desired.

Instead, let $T$ be a component of the subgraph induced by $(k-1)$-vertices. 
Now the vertices in $T$ receive total charge 
\[\frac{k-1}{k^2-3}\sum_{v \in V(T)} (k-1 - d_T(v)) =
\frac{k-1}{k^2-3}\parens{(k-1)|T| - 2\size{T}}.\]
So, after distributing this charge equally, each vertex in $T$ receives charge
\[\frac{1}{|T|}\frac{k-1}{k^2-3}((k-1)|T| - 2\size{T}) = \frac{k-1}{k^2-3}\parens{(k-1) - d(T)}.\]
By Lemma \ref{BasicGallaiTreeBound}, which we prove next, this is greater than
\[\frac{k-1}{k^2-3}\parens{(k-1) - \parens{k-2 + \frac{2}{k-1}}} = \frac{k-1}{k^2-3}\parens{\frac{k-3}{k-1}} = \frac{k-3}{k^2-3}.\]
Hence, each $(k-1)$-vertex ends with charge greater than $k-1 +
\frac{k-3}{k^2-3}$, as desired.
\end{proof}

\begin{lem}[Gallai]
\label{BasicGallaiTreeBound}
	For $k \ge 4$ and $T \in \T_k$, we have $d(T) < k-2 + \frac{2}{k-1}$.
\end{lem}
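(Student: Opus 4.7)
The plan is to proceed by induction on $|V(T)|$, tracking the potential $\phi(T) := \alpha|V(T)| - 2|E(T)|$, where $\alpha := k-2 + \tfrac{2}{k-1}$; the inequality $d(T) < \alpha$ is equivalent to $\phi(T) > 0$, so this is the form I will prove.

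For the base case, $T$ has a single block, so the constraints that $T \in \T_k$ (degree at most $k-1$, and $T\ne K_k$) force $T = K_r$ with $r \le k-1$ or $T = C_r$ an odd cycle, and a one-line computation of $\phi(T)$ confirms positivity in each case (via $\alpha > k-2$ for cliques and $\alpha > 2$ for odd cycles).

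For the inductive step, $T$ has a leaf block $B$ with unique cut vertex $v$. In the benign case $B \ne K_{k-1}$, set $T' := T - (V(B)\setminus\{v\})$; then $T' \in \T_k$ is strictly smaller, and
\[\phi(T) - \phi(T') \;=\; \alpha(|V(B)|-1) - 2|E(B)|,\]
which is non-negative in each allowed block type (a one-line check for cliques $K_r$ with $r \le k-2$ and for odd cycles $C_r$; the borderline $(k,r) = (4,3)$ coincides with $K_{k-1}$ and is handled below). Combined with the induction hypothesis, this yields $\phi(T) \ge \phi(T') > 0$.

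The main obstacle is the case $B = K_{k-1}$, where the previous move strictly decreases $\phi$. Here the key observation is that $v$ has degree $k-2$ inside $B$, so $\Delta(T)\le k-1$ allows $v$ at most one neighbor outside $B$; since $v$ is a cut vertex, it has exactly one such neighbor $u$, and one checks that $\{vu\}$ must then itself constitute an edge block of $T$. I would therefore set $T'' := T - V(B)$, removing both $B$ and the pendant edge $vu$. A short block-cut tree argument shows that $T''$ is connected, is a Gallai tree with $\Delta(T'') \le k-1$, and is not $K_k$ (otherwise $u$ would have degree $k$ in $T$), so $T'' \in \T_k$. A direct calculation then gives the identity $\phi(T'') = \phi(T)$, precisely because $\alpha(k-1) = (k-1)(k-2) + 2$. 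Induction applied to $T''$ finishes the proof. The exactness of this identity is not a coincidence: long chains of $K_{k-1}$'s joined by bridging edges, where every reduction falls into this case, saturate $d(T) \to \alpha$ in the limit, so the $K_{k-1}$-case essentially dictates the shape of the bound.
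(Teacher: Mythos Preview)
Your proof is correct and follows essentially the same approach as the paper's: induction (equivalently, a minimal counterexample) removing an endblock, with the key move in the $K_{k-1}$ case being to delete the entire block including its cut vertex, which yields the exact identity $\phi(T)=\phi(T'')$. Your argument is in fact slightly more explicit than the paper's, since you spell out why the cut vertex of a $K_{k-1}$ endblock must lie in a $K_2$ block (so that $T-V(B)$ is connected and remains in $\T_k$), whereas the paper's proof uses this implicitly when writing $2\|T\|-(k-1)(k-2)-2=2\|T'\|$.
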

\begin{proof}
	Suppose the lemma is false and choose a counterexample $T$ minimizing
$|T|$.  Now $T$ has at least two blocks.  Let $B$ be an endblock of $T$.  If $B$
is $K_t$ for some $t\in \{2,\ldots, k-2\}$, then remove the non-cut vertices of
$B$ from $T$ to get $T'$.  By the minimality of $|T|$, we have 
	\[2\size{T} - t(t-1) = 2\size{T'} < \parens{k-2 + \frac{2}{k-1}}|T'| = \parens{k-2 + \frac{2}{k-1}}\parens{|T|-(t-1)}.\]
	Hence, we have the contradiction
	\[2\size{T} < \parens{k-2 + \frac{2}{k-1}}|T| + (t+2 -k - \frac{2}{k-1})(t-1) \le \parens{k-2 + \frac{2}{k-1}}|T|.\]
	
	The case when $B$ is an odd cycle is similar to that above, when $t=3$; a
longer cycle just makes the inequality stronger.  Finally, if $B = K_{k-1}$,
remove all vertices of $B$ from $T$ to get $T'$. By the minimality of $|T|$, we have 
	\begin{align*}
	  2\size{T} - (k-1)(k-2) - 2 &= 2\size{T'}\\
	  &< \parens{k-2 + \frac{2}{k-1}}|T'|\\
	  &= \parens{k-2 + \frac{2}{k-1}}|T| - \parens{k-2 + \frac{2}{k-1}}(k-1).
	\end{align*}

	Hence, $2\size{T} < \parens{k-2 + \frac{2}{k-1}}|T|$, a contradiction.
\end{proof}

\section{A refined bound on \texorpdfstring{$||T||$}{||T||}}
Lemma~\ref{BasicGallaiTreeBound} is essentially best possible, as shown by a
path of copies of $K_{k-1}$, with each successive pair of copies linked by a
copy of $K_2$.  When the path $T$ has $m$ copies of $K_{k-1}$, we get
$2||T||=m(k-1)(k-2)+2(m-1) = (k-2+\frac2{k-2})|T|-2$.  And a small modification
to the proof above yields $2||T|| \le (k-2+\frac2{k-2})|T|-2$. 
Fortunately, this is not the end of the story.

We see two potential places that we could improve the bound in
Theorem~\ref{thm:Gallai}. For each graph $G$, we could show that either (i) the
bound in Lemma~\ref{BasicGallaiTreeBound} is loose or (ii) many of the
$k^+$-vertices finish with extra charge, because they have incident edges
leading to other $k^+$-vertices (rather than only $(k-1)$-vertices, as allowed
in the proof of Theorem~\ref{thm:Gallai}).  A good way to quantify this
slackness in the proof is with the parameter $q(T)$\aside{$q(T)$}, which
denotes the number of non-cut vertices in $T$ that appear in copies of
$K_{k-1}$.  When $q(T)$ is small relative to $|T|$, we can save as in (i)
above.  And when it is large, we can save as in (ii).  In the direction of (i),
we now prove a bound on $||T||$ akin to that in
Lemma~\ref{BasicGallaiTreeBound}, but which is stronger when
$q(T)\le|T|\frac{k-3}{k-1}$.  In Section~\ref{discharging} 
we do the discharging; at that point we handle case (2),
using a reducibility lemma proved in~\cite{OreVizing}. 

Without more reducible configurations we can't hope to prove $d(T) < k-3$, since
each component $T$ could be a copy of $K_{k-2}$.  This is why our next bound on
$2||T||$ has the form $(k-3 + p(k))|T|$; since we will always have $p(k)>0$,
this is slightly worse than average degree $k-3$.  To get the best edge bound
we will take $p(k)=\frac{3k-5}{k^2 - 4k + 5}$, but we prefer to prove the more general
formulation, which shows that previous work of Gallai \cite{gallai1963kritische}
and Kostochka and Steibitz~\cite{kostochkastiebitzedgesincriticalgraph} fits
the same pattern.  This general version will also be more
convenient for the discharging.  %
It is helpful to handle separately the cases $K_{k-1}\not\subseteq T$ and
$K_{k-1}\subseteq T$.  The former is simpler, since it implies $q(T)=0$, so we
start there.

\begin{lem}\label{BoundFamilyWithoutKKMinusOne}
	Let $\func{p}{\IN}{\IR}$, $\func{f}{\IN}{\IR}$.
	For all $k \ge 5$ and $T \in \T_k$ with $K_{k-1} \not \subseteq T$, we have
	\[2\size{T} \le (k-3 + p(k))\card{T} + f(k)\]
	whenever $p$ and $f$ satisfy all of the following conditions:
	\begin{enumerate}
		\item $p(k) \ge \frac{-f(k)}{k-2}$; and
		\item $p(k) \ge \frac{-f(k)}{5} + 5 - k$; and
		\item $0\ge f(k)\ge -k+2$; and
		\item $p(k) \ge \frac{3}{k-2}$.
	\end{enumerate}
\end{lem}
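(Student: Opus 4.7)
My plan is strong induction on $\card{T}$, with target $\phi(T) \DefinedAs (k-3+p(k))\card{T} + f(k) - 2\size{T} \ge 0$. The base cases are when $T$ is a single block. If $T = K_t$ with $1 \le t \le k-2$, then $\phi(T) = t(k-2+p(k)-t) + f(k)$ is minimised at the endpoints $t \in \{1, k-2\}$: non-negativity at $t=k-2$ is condition~1 rewritten as $(k-2)p(k) + f(k) \ge 0$, and at $t=1$ it follows from combining conditions~1 and~3 to get $p(k)+f(k) \ge f(k)(k-3)/(k-2) \ge -(k-3)$. If $T = C_{2l+1}$ with $l\ge 2$, then $\phi(T) = (2l+1)(k-5+p(k)) + f(k)$, which is $\ge 0$ for $l=2$ by condition~2 (this is exactly what condition~2 is designed for) and for $l\ge 3$ by monotonicity in $l$.

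For the inductive step, let $B$ be an endblock of $T$ with cut vertex $v$. If $B = K_t$ with $2\le t\le k-3$, deleting the $t-1$ non-cut vertices of $B$ yields $T'$ with $\phi(T) - \phi(T') = (t-1)(k-3+p(k)-t) \ge (t-1)p(k) \ge 0$, so IH on $T'$ closes this case. If $B = C_{2l+1}$ with $l\ge 2$, the analogous deletion gives $\phi(T) - \phi(T') = 2l(k-5+p(k)) - 2$, which is non-negative either trivially (for $k \ge 6$) or by condition~4 (for $k=5$). The remaining possibility is $B = K_{k-2}$, in which case the outside-$B$ degree $d'$ of $v$ lies in $\{1,2\}$ since $\Delta(T)\le k-1$. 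If $d'=1$, I would delete all of $V(B)$ (including $v$ and its lone outside edge): this removes $\binom{k-2}{2}+1$ edges over $k-2$ vertices, giving $\phi(T) - \phi(T-V(B)) = (k-2)p(k) - 2 \ge 1$ by condition~4, and IH componentwise on $T-V(B)$ finishes, using that $\phi$ is super-additive over disjoint unions when $f(k) \le 0$.

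\emph{The main obstacle} is the subcase $d'=2$, where $v$ lies in $B$ together with either (i) a single odd cycle $C$ in which $v$ has local degree $2$ (possibly $C=K_3$), or (ii) two $K_2$-blocks; a plain deletion of $V(B)$ now only yields $(k-2)p(k)-4$, which condition~4 alone does not cover. My plan is to rewrite $\phi(T) = \sum_{B'} \gamma(B') + (k-3+p(k)) + f(k)$, where $\gamma(K_t) \DefinedAs t(k-2+p(k)-t) - (k-3+p(k))$ and $\gamma(C_{2l+1}) \DefinedAs 2l(k-5+p(k)) - 2$. A short verification (using conditions~1, 2, and~4 at the respective boundaries) shows $\gamma(B') \ge 0$ for every block $B' \ne K_{k-2}$, while $\gamma(K_{k-2}) = (k-3)(p(k)-1)$ is the only potentially negative contribution. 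For $k \ge 6$, two $K_{k-2}$ blocks cannot share a cut vertex (such a vertex would have degree $\ge 2(k-3) > k-1$), which strongly constrains how the bad blocks can be distributed in $T$; combining this structural fact with the key local inequality $\gamma(K_2) + \gamma(K_{k-2}) = (k-2)p(k) - 2 \ge 1$ (from condition~4) and the residual slack $(k-3+p(k)) + f(k) \ge 0$ (already established in the base case) should yield $\phi(T) \ge 0$ after a careful amortisation over the bad $K_{k-2}$ endblocks and their (possibly shared) non-$K_{k-2}$ neighbors.
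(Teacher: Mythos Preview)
Your induction skeleton, the base cases, and the inductive steps for endblocks $K_t$ ($t\le k-3$), odd cycles, and $K_{k-2}$ with $d'=1$ are all correct and essentially match the paper's argument. Your block-decomposition identity $\phi(T)=\sum_{B'}\gamma(B')+(k-3+p(k))+f(k)$ is also correct and is a nice way to organise the computation. The gap is precisely where you flag it: the subcase $B=K_{k-2}$ with $d'=2$.

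The issue is that your ``careful amortisation'' is not actually carried out, and the one local inequality you isolate, $\gamma(K_2)+\gamma(K_{k-2})\ge 1$, is not enough by itself. First, the non-$K_{k-2}$ neighbour of a bad block need not be a $K_2$: for $k\ge 7$ it must be $K_2$, $K_3$, or an odd cycle (since the shared cut vertex has only two edges outside the $K_{k-2}$), but for $k=6$ it could also be $K_{k-3}=K_3$ or even $K_4$. Second, and more seriously, a single good block $C$ can be adjacent to many bad blocks simultaneously (e.g., a long odd cycle with a $K_{k-2}$ hung at every vertex), so any pairing scheme must be many-to-one, and you then need inequalities of the form $\gamma(C)+r\,\gamma(K_{k-2})\ge 0$ where $r$ depends on $|C|$. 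These do hold (essentially because $\gamma$ grows linearly with $|C|$ while each vertex of $C$ hosts at most one bad neighbour), but establishing them for every possible $C$ is a genuine case analysis you have not done, and your phrase ``should yield'' correctly signals that you have not closed this.

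The paper avoids the global amortisation entirely. Once every endblock is $K_{k-2}$ with $d'=2$, it picks an endblock $B$ at the end of a \emph{longest path} in the block-tree, lets $C$ be the odd cycle through the cut vertex $x_B$, and looks at a neighbour $y$ of $x_B$ on $C$. By maximality of the path, $y$ either lies in no other block or lies in another $K_{k-2}$ endblock $A$; in the first case one deletes $D=B\cup\{y\}+yx_B$, in the second $D=B\cup A+yx_B$. Both deletions give a connected $T'$, and the resulting inequalities reduce (after simplification) to $6>k+\tfrac{3}{k-2}$ and $3>(k-2)p(k)$ respectively, each contradicting condition~(4). This is a two-line local surgery rather than a global charging scheme, and it finishes the proof cleanly. (Your case~(ii), where $x_B$ sits in two $K_2$-blocks rather than a cycle, is glossed over in the paper's phrasing, but the same longest-path argument handles it: the far end of one of those $K_2$'s plays the role of $y$.)
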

\begin{proof}
A general outline for the proof is that it mirrors that of
Lemma~\ref{BasicGallaiTreeBound}, and we add as hypotheses all of the conditions that
we need along the way.

Suppose the lemma is false and choose a counterexample $T$ minimizing $|T|$. 
If $T$ is $K_t$ for some $t \in \{2,k-2\}$, then $t(t-1) > (k-3 + p(k))t +
f(k)$.  After substituting $p(k)\ge \frac{-f(k)}{k-2}$ from (1), this
simplifies to $-t(k-2)>f(k)$, which contradicts (3).  If $T$ is $C_{2r+1}$ for
$r \ge 2$, then $2(2r+1) > (k-3 + p(k))(2r+1) + f(k)$ and hence
$(5-k-p(k))(2r+1)>f(k)$.  Since $f(k)\le 0$, this contradicts (2).  (Note that
we only use conditions (1), (2), and (3) when $T$ has a single block;
these are the base cases when the proof is phrased using induction.)

Let $D$ be an 
induced subgraph such that $T\setminus D$ is connected.  (We will choose $D$ to
be a connected subgraph contained in at most three blocks of $T$.)
Let $T' = T \setminus D$. 
By the minimality of $|T|$, we have
	\[2\size{T'} \le (k-3 + p(k))\card{T'} + f(k).\]
	Since $T$ is a counterexample, subtracting this inequality from the inequality for
$2||T||$ gives
	\begin{equation}
	2\size{T} - 2\size{T'} > (k-3 + p(k))|D|. \tag{*}\label{eqn:*}
	\end{equation}
	
Suppose $T$ has an endblock $B$ that is $K_t$ for some $t \in \{3,\ldots,
k-3\}$; let $x_B$ be a cut vertex of $B$ and let $D=B-x_B$.
Now \eqref{eqn:*} gives $2\size{T}-2\size{T'} =
\card{B}(\card{B}-1)>(k-3+p(k))(|B|-1)$, which 
is a contradiction, since $|B|\le k-3$ and $p(k)>0$.
Suppose instead that $T$ has an endblock $B$ that is an odd cycle.  Again, let
$D=B-x_B$.  Now we get $2|B|>(k-3+p(k))(|B|-1)$.  This simplifies to $|B|<1+\frac2{k-5+p(k)}$, which is a contradiction, 
since the denominator is always at least 1 (using (4) when $k=5$).
Finally suppose that $T$ has an endblock $B$ that is $K_2$. Now \eqref{eqn:*} gives
$2 > k-3 + p(k)$, which is again a contradiction, since $k \ge 5$ and $p(k) > 0$.
	
To handle the case when $B$ is $K_{k-2}$ we need to remove $x_B$ from $T$ as
well, so we simply let $D=B$.  
Since $B=K_{k-2}$, we have either $d_T(x_B) = k - 2$ or $d_T(x_B) =
k-1$. When $d_T(x_B) = k - 2$, we have
	\[(k-2)(k-3) +2 > (k-3 + p(k))(k-2),\]
	contradicting (4).
	
The only remaining case is when $B$ is $K_{k-2}$ and $d_T(x_B) =
k - 1$.  Each case above applied when $B$ was any endblock of $T$, so we may
assume that every endblock of $T$ is a copy of $K_{k-2}$ that shares a vertex
with an odd cycle.  Choose an endblock $B$ that is the end of a longest path in
the block-tree of $T$.  Let $C$ be the odd cycle sharing a vertex $x_B$ with
$B$.  Consider a neighbor $y$ of $x_B$ on $C$ that either (i) lies only in $C$
or (ii) lies also in an endblock $A$ that is a copy of $K_{k-2}$ (such a
neighbor exists because $B$ is at the end of a longest path in the block-tree).
In (i), let $D=B\cup\{y\}+yx_B$; in (ii), let $D=B\cup A+yx_B$.

In (i), equation \eqref{eqn:*} gives

\[(k-2)(k-3)+2(3) > (k-3+p(k))(k-1).\]
This simplifies to $6>k-3+(k-1)p(k)$, and eventually, by (4), to $6>k+\frac3{k-2}$, which yields a contradiction.

In (ii), equation \eqref{eqn:*} gives
	\[2(k-2)(k-3) + 2(3) > 2(k-3 + p(k))(k-2),\]
	which simplifies to
	\[3 > (k-2)p(k),\]
	again contradicting (4).
\end{proof}

Lemma \ref{BoundFamilyWithoutKKMinusOne} gives the tightest bound on $||T||$
when $p(k) = \frac{3}{k-2}$ and $f(k) = -3$.   However, for the discharging in
Section~\ref{discharging}, it will be convenient to apply Lemma
\ref{BoundFamilyWithoutKKMinusOne} with a larger $p(k)$, to match the best value
of $p(k)$ that works in the analogous lemma for $K_{k-1}\subseteq T$.  We now
prove such a lemma.  Its statement is similar to the
previous one, but with an extra term in the bound, as well as slightly different hypotheses.

\begin{lem}\label{BoundFamilyWithKKMinusOne}
	Let $\func{p}{\IN}{\IR}$, $\func{f}{\IN}{\IR}$, $\func{h}{\IN}{\IR}$. 
	For all $k \ge 5$ and $T \in \T_k$ with $K_{k-1} \subseteq T$, we have
	\[2\size{T} \le (k-3 + p(k))\card{T} + f(k) + h(k)q(T)\]
	whenever $p$, $f$, and $h$ satisfy all of the following conditions:
	\begin{enumerate}
		\item $f(k) \ge (k-1)(1- p(k) - h(k))$; and	
	    \item $p(k) \ge \frac{3}{k-2}$; and
		\item $p(k) \ge h(k) + 5 - k$; and
		\item $p(k) \ge \frac{2+h(k)}{k-2}$; and
		\item $(k-1)p(k) + (k-3)h(k) \ge k+1$.
	\end{enumerate}
	
\end{lem}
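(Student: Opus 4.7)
The plan is to induct on $\card{T}$, mirroring the proof of Lemma~\ref{BoundFamilyWithoutKKMinusOne} but accounting for both a new base case and a new endblock case involving $K_{k-1}$, while tracking changes in $q(T)$ throughout. I would take a minimum counterexample $T\in\T_k$ with $K_{k-1}\subseteq T$. If $T$ is a single block, then (for $k\geq 5$, since odd cycles contain no $K_{k-1}$) it must be that $T=K_{k-1}$; plugging in $\card{T}=k-1$, $2\size{T}=(k-1)(k-2)$, and $q(T)=k-1$ reduces the desired inequality to condition~(1).

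Otherwise $T$ has at least two blocks. I would choose an endblock $B$ and a subgraph $D$ such that $T':=T-D$ is connected, and invoke the inductive hypothesis on $T'$: either this lemma (when $K_{k-1}\subseteq T'$) or Lemma~\ref{BoundFamilyWithoutKKMinusOne} (when not; its hypotheses should be checkable from (1)--(5)). Subtracting the two bounds yields
\[2\size{T}-2\size{T'} > (k-3+p(k))\card{D} + h(k)(q(T)-q(T')).\]
When $B$ is not $K_{k-1}$, the non-cut vertices of $B$ lie in no $K_{k-1}$ block, so $q(T)=q(T')$, and the analysis would reduce verbatim to that of Lemma~\ref{BoundFamilyWithoutKKMinusOne}, with the contradictions coming from conditions (2)--(4).

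The genuinely new case is $B=K_{k-1}$. Here the cut vertex $x_B$ has exactly one external neighbor $y$, joined via a $K_2$ block (since $x_B$ already uses $k-2$ of its allowed $k-1$ neighbors inside $B$). Taking $D=V(B)$ gives $\card{D}=k-1$ and $2(\size{T}-\size{T'})=(k-1)(k-2)+2$; the $k-2$ non-cut vertices of $B$ each lie in the $K_{k-1}$ block $B$ and are removed, so $q(T)-q(T')$ equals $k-2$ in general, or drops to $k-3$ exactly when $y$ becomes a non-cut vertex of $T'$ that lies in some $K_{k-1}$ block of $T$. In this latter sub-case the inequality simplifies to $(k-1)p(k)+(k-3)h(k)<k+1$, directly contradicting condition~(5) --- which is precisely what (5) is calibrated for. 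The main obstacle will be the other sub-case, which requires $(k-1)p(k)+(k-2)h(k)\geq k+1$; this follows easily from (5) when $h(k)\geq 0$, but in general it will require a further refinement: combining $B$ with $y$'s other block in the spirit of the combine-with-odd-cycle trick used for $K_{k-2}$ endblocks in Lemma~\ref{BoundFamilyWithoutKKMinusOne}, and treating the minimal configuration $T=K_{k-1}$ plus a pendant directly via conditions (1) and~(3).
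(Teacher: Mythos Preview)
Your overall inductive strategy matches the paper's, but there is a genuine gap in your tracking of $q$. You assert that when the endblock $B$ is not $K_{k-1}$, the non-cut vertices of $B$ lie in no $K_{k-1}$ block and hence $q(T)=q(T')$, so the case ``reduces verbatim to Lemma~\ref{BoundFamilyWithoutKKMinusOne}.'' This fails for $B=K_2$ and for $B=K_{k-2}$. When $B=K_2$ and you delete the leaf, the cut vertex $x_B$ has only one edge in $B$, so it may have $k-2$ neighbors in an adjacent $K_{k-1}$ block; in $T'$ it can become a non-cut vertex of that $K_{k-1}$, giving $q(T')=q(T)+1$. Similarly, when $B=K_{k-2}$ with $d_T(x_B)=k-2$ and you delete all of $B$, the single outside neighbor of $x_B$ may gain $q$-status in $T'$. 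In both cases $q(T)-q(T')\ge -1$, not $0$, and the resulting inequalities are precisely $p(k)<h(k)+5-k$ and $p(k)<\frac{2+h(k)}{k-2}$: this is exactly why conditions~(3) and~(4) carry $h(k)$ terms. So these endblock cases do \emph{not} reduce verbatim to Lemma~\ref{BoundFamilyWithoutKKMinusOne}; you must redo them with the $q$-correction built in.

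Two smaller points where your outline diverges from the paper. First, for $B=K_{k-1}$ the paper does not split into your two sub-cases: it simply records $q(T')\le q(T)-(k-2)+1$, i.e.\ $q(T)-q(T')\ge k-3$, and condition~(5) gives the contradiction directly; your ``main obstacle'' only arises if $h(k)<0$, which never occurs in the intended applications. Second, the paper never falls back to Lemma~\ref{BoundFamilyWithoutKKMinusOne}. It removes a $K_{k-1}$ endblock only under the extra hypothesis $K_{k-1}\subseteq T\setminus B$, concludes that at most one endblock is $K_{k-1}$, and thereafter works with a $K_{k-2}$ endblock; thus $T'$ always still contains a $K_{k-1}$ and the induction stays inside the present lemma. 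Your proposed fallback would require that the hypotheses of Lemma~\ref{BoundFamilyWithoutKKMinusOne} (in particular $0\ge f(k)\ge -(k-2)$) follow from~(1)--(5) with the same $f$, which you have not verified and which is not automatic.
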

\begin{proof}

The proof is similar to that of Lemma~\ref{BoundFamilyWithoutKKMinusOne}.  The
main difference is that now our only base case is $T=K_{k-1}$.  For this
reason, we replace hypotheses (1), (2), and (3) of
Lemma~\ref{BoundFamilyWithoutKKMinusOne}, which we used only for the base cases
of that proof, with our new hypothesis (1), which we use for the current base
case.  When some endblock $B$ is an odd cycle or $K_t$, with $t\in\{3, \ldots,
k-3\}$, the induction step is identical to that in
Lemma~\ref{BoundFamilyWithoutKKMinusOne}, since deleting $D$ does not change
$q(T)$.
It is easy to check that, as needed, $K_{k-1}\subseteq T\setminus D$.  Thus, we
need only to consider the induction step when $T$ has an endblock $B$ that is
$K_2$, $K_{k-2}$, or $K_{k-1}$.  As we will see, these three cases require
hypotheses (3), (4), and (5), respectively.
    
Let $T$ be a counterexample minimizing $|T|$.  Let $D$ be an induced subgraph
such that $T\setminus D$ is connected, and let $T'=T\setminus D$. The same
argument as in Lemma~\ref{BoundFamilyWithoutKKMinusOne} now gives
    \begin{equation}
		2||T||-2||T'|| > (k-3 + p(k))\card{D} + h(k)\parens{q(T) -
q(T')}.\tag{**}\label{eqn:**}
	\end{equation}
If $B$ is $K_2$, then $q(T') \le q(T) + 1$ and \eqref{eqn:**} gives $2 > k-3 +
p(k) - h(k)$, contradicting (3).
So every endblock of $B$ is $K_{k-2}$ or $K_{k-1}$. To handle these cases, we
will need to remove $x_B$ from $T$ as well.  Suppose some endblock $B$ is
$K_{k-1}$ and $K_{k-1} \subseteq T\setminus B$.  Let $D=B$.  Now
$q(T') \le q(T)-(k-2)+1$.  So \eqref{eqn:**} gives
	
\[ (k-1)(k-2)+2 > (k-3+p(k))(k-1)+h(k)(k-3).\]
This simplifies to $k+1 > (k-1)p(k)+(k-3)h(k)$, which contradicts (5).  Thus,
at most one endblock of $T$ is $K_{k-1}$.
Since the cases above apply when $B$ is any endblock, each other endblock must
be $K_{k-2}$.  Let $B$ be such an endblock, and $x_B$ its cut vertex.	So
$d_T(x_{B}) = k - 2$ or $d_T(x_{B}) = k-1$.  In the former case, $q(T') \le
q(T) + 1$, and in the latter, $q(T) = q(T')$.
If $d_T(x_{B}) = k - 2$, then \eqref{eqn:**} gives
\[(k-2)(k-3) +2 > (k-3 + p(k))(k-2) - h(k),\]
which simplifies to $\frac{2+h(k)}{k-2} > p(k)$, and contradicts (4).
	
Hence, all but at most one endblock of $T$ is a copy of $K_{k-2}$ with a
cut vertex that is also in an odd cycle.  Let $B$ be such an endblock 
at the end of a longest path in the block-tree of $T$, and let $C$ be the odd
cycle sharing a vertex $x_B$ with $B$.  Consider a neighbor $y$ 
of $x_B$ on $C$ that either (i) lies only in block $C$ or (ii) lies also in an
endblock $A$ that is a copy of $K_{k-2}$ (such a neighbor exists 
because $B$ is at the end of a longest path in the block-tree).  In (i), let
$D=B\cup\{y\}+yx_B$; in (ii), let $D=B\cup A+yx_B$.  Let $T'=T\setminus
V(D)$.	In each case, we have $q(T')=q(T)$, so the analysis is identical to that
in the proof of Lemma~\ref{BoundFamilyWithoutKKMinusOne}.
\end{proof}

Now let's see some examples of using Lemma \ref{BoundFamilyWithoutKKMinusOne}
and Lemma \ref{BoundFamilyWithKKMinusOne}.  What happens if we take $h(k) = 0$
in Lemma \ref{BoundFamilyWithKKMinusOne}?  By hypothesis (5), we need
$(k-1)p(k) \ge k + 1$ and hence $p(k) \ge 1 + \frac{2}{k-1}$.  Taking $p(k) = 1 +
\frac{2}{k-1}$, (1) requires $f(k) \ge -2$.  Using $h(k)=0$,
$p(k)=1+\frac2{k-1}$, and $f(k) = -2$, all of the conditions are satisfied in
both of Lemmas~\ref{BoundFamilyWithoutKKMinusOne} and
\ref{BoundFamilyWithKKMinusOne}, so we conclude $2\size{T} \le \parens{k-2 +
\frac{2}{k-1}}\card{T} - 2$ for every $T \in \T_k$ when $k \ge 5$.  This is the
previously mentioned slight refinement of Gallai's Lemma \ref{BasicGallaiTreeBound}.

Instead, let's make $p(k)$ as small as Lemma \ref{BoundFamilyWithKKMinusOne}
allows. By (4), $h(k) \le (k-2)p(k) - 2$. Plugging this into (5) and solving,
we get $p(k) \ge \frac{3k-5}{k^2 - 4k + 5}$.  Now $\frac{3k-5}{k^2 - 4k + 5}
\ge \frac{3}{k-2}$ for $k \ge 5$, so $p(k) = \frac{3k-5}{k^2 - 4k + 5}$
satisfies (2).  With $h(k) = \frac{k(k-3)}{k^2 - 4k + 5}$, we also satisfy (3),
(4), and (5).  Now with $f(k) = -\frac{2(k-1)(2k-5)}{k^2 - 4k + 5}$, condition
(1) is satisfied, so by Lemma~\ref{BoundFamilyWithKKMinusOne} we have the following.

\begin{cor}\label{SmallP}
	For $k \ge 5$ and $T \in \T_k$ with $K_{k-1} \subseteq T$, we have
	\[2\size{T} \le \parens{k-3 + \frac{3k-5}{k^2 - 4k + 5}}\card{T} - \frac{2(k-1)(2k-5)}{k^2 - 4k + 5} + 
	\frac{k(k-3)}{k^2 - 4k + 5}q(T).\]
\end{cor}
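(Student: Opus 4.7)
The plan is to apply Lemma~\ref{BoundFamilyWithKKMinusOne} with the three functions $p(k)=\frac{3k-5}{k^2-4k+5}$, $h(k)=\frac{k(k-3)}{k^2-4k+5}$, and $f(k)=-\frac{2(k-1)(2k-5)}{k^2-4k+5}$ that are already identified in the discussion preceding the corollary statement. Substituting these into the bound $2\size{T}\le (k-3+p(k))\card{T}+f(k)+h(k)q(T)$ supplied by the lemma produces exactly the stated inequality, so the whole proof reduces to verifying that hypotheses (1)--(5) of Lemma~\ref{BoundFamilyWithKKMinusOne} hold for these specific choices of $p$, $h$, $f$.

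All three functions share the common denominator $k^2-4k+5$, so each hypothesis becomes, after clearing denominators, a polynomial inequality in $k$ to be checked for $k\ge 5$. I would verify them in the order (4), (5), (1), (2), (3). The first three were derived precisely by taking the respective inequalities to equality: one checks $(k-2)p(k)=2+h(k)$, $(k-1)p(k)+(k-3)h(k)=k+1$, and $f(k)=(k-1)(1-p(k)-h(k))$, each a short direct computation. For (2), clearing denominators in $\frac{3k-5}{k^2-4k+5}\ge\frac{3}{k-2}$ yields $(3k-5)(k-2)\ge 3(k^2-4k+5)$, which simplifies to $k\ge 5$. For (3), after rearrangement the inequality $p(k)-h(k)\ge 5-k$ becomes $\frac{(k-1)(k-5)}{k^2-4k+5}\le k-5$, which is trivial at $k=5$ and for $k>5$ reduces (after dividing by $k-5$) to $(k-2)(k-3)\ge 0$.

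The main obstacle is purely computational bookkeeping: the denominator $k^2-4k+5$ was engineered specifically to make (4) and (5) hold with equality, so the only real risk is an algebra slip while confirming (1)--(3) against the same denominator. Once the five verifications above go through, Lemma~\ref{BoundFamilyWithKKMinusOne} delivers the stated bound immediately.
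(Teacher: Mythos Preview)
Your proposal is correct and matches the paper's own argument: the paper derives these same values of $p(k)$, $h(k)$, $f(k)$ in the paragraph immediately preceding the corollary by forcing (4) and (5) to hold with equality, then checks (1), (2), (3) exactly as you describe, and invokes Lemma~\ref{BoundFamilyWithKKMinusOne}. The computations you outline are all accurate.
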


If we put a similar bound of Kostochka and
Stiebitz~\cite{kostochkastiebitzedgesincriticalgraph} into this form, we get
the following.
\begin{lem}[Kostochka--Stiebitz]

\begin{figure}
\centering
\begin{tikzpicture}[scale = 10]
\tikzstyle{VertexStyle} = []
\tikzstyle{EdgeStyle} = []
\tikzstyle{labeledStyle}=[shape = circle, minimum size = 6pt, inner sep = 1.2pt, draw]
\tikzstyle{unlabeledStyle}=[shape = circle, minimum size = 6pt, inner sep = 1.2pt, draw, fill]
\tikzstyle{Qstyle}=[shape = circle,minimum size = 6pt,inner sep = 1.2pt,draw]
\tikzstyle{NotQKkMinusOneStyle}=[shape = circle,minimum size = 6pt,inner sep = 1.2pt,draw]
\tikzstyle{endblockStyle}=[shape = circle,minimum size = 6pt,inner sep = 1.2pt,draw]
\Vertex[style = Qstyle, x = 0.450, y = 0.600, L = \tiny {}]{v0}
\Vertex[style = NotQKkMinusOneStyle, x = 0.550, y = 0.600, L = \tiny {}]{v1}
\Vertex[style = NotQKkMinusOneStyle, x = 0.450, y = 0.500, L = \tiny {}]{v2}
\Vertex[style = NotQKkMinusOneStyle, x = 0.550, y = 0.500, L = \tiny {}]{v3}
\Vertex[style = endblockStyle, x = 0.350, y = 0.450, L = \tiny {}]{v4}
\Vertex[style = endblockStyle, x = 0.300, y = 0.350, L = \tiny {}]{v5}
\Vertex[style = endblockStyle, x = 0.250, y = 0.450, L = \tiny {}]{v6}
\Vertex[style = endblockStyle, x = 0.600, y = 0.300, L = \tiny {}]{v7}
\Vertex[style = endblockStyle, x = 0.500, y = 0.300, L = \tiny {}]{v8}
\Vertex[style = endblockStyle, x = 0.550, y = 0.400, L = \tiny {}]{v9}
\Vertex[style = NotQKkMinusOneStyle, x = 0.750, y = 0.500, L = \tiny {}]{v10}
\Vertex[style = NotQKkMinusOneStyle, x = 0.850, y = 0.500, L = \tiny {}]{v11}
\Vertex[style = NotQKkMinusOneStyle, x = 0.750, y = 0.600, L = \tiny {}]{v12}
\Vertex[style = Qstyle, x = 0.850, y = 0.600, L = \tiny {}]{v13}
\Vertex[style = endblockStyle, x = 1.050, y = 0.450, L = \tiny {}]{v14}
\Vertex[style = endblockStyle, x = 1.000, y = 0.350, L = \tiny {}]{v15}
\Vertex[style = endblockStyle, x = 0.950, y = 0.450, L = \tiny {}]{v16}
\Vertex[style = endblockStyle, x = 0.800, y = 0.300, L = \tiny {}]{v17}
\Vertex[style = endblockStyle, x = 0.700, y = 0.300, L = \tiny {}]{v18}
\Vertex[style = endblockStyle, x = 0.750, y = 0.400, L = \tiny {}]{v19}
\Edge[label = \tiny {}, labelstyle={auto=right, fill=none}](v1)(v0)
\Edge[label = \tiny {}, labelstyle={auto=right, fill=none}](v1)(v2)
\Edge[label = \tiny {}, labelstyle={auto=right, fill=none}](v1)(v3)
\Edge[label = \tiny {}, labelstyle={auto=right, fill=none}](v2)(v0)
\Edge[label = \tiny {}, labelstyle={auto=right, fill=none}](v2)(v4)
\Edge[label = \tiny {}, labelstyle={auto=right, fill=none}](v3)(v0)
\Edge[label = \tiny {}, labelstyle={auto=right, fill=none}](v3)(v2)
\Edge[label = \tiny {}, labelstyle={auto=right, fill=none}](v4)(v5)
\Edge[label = \tiny {}, labelstyle={auto=right, fill=none}](v4)(v6)
\Edge[label = \tiny {}, labelstyle={auto=right, fill=none}](v5)(v6)
\Edge[label = \tiny {}, labelstyle={auto=right, fill=none}](v7)(v8)
\Edge[label = \tiny {}, labelstyle={auto=right, fill=none}](v7)(v9)
\Edge[label = \tiny {}, labelstyle={auto=right, fill=none}](v8)(v9)
\Edge[label = \tiny {}, labelstyle={auto=right, fill=none}](v9)(v3)
\Edge[label = \tiny {}, labelstyle={auto=right, fill=none}](v11)(v10)
\Edge[label = \tiny {}, labelstyle={auto=right, fill=none}](v11)(v12)
\Edge[label = \tiny {}, labelstyle={auto=right, fill=none}](v11)(v13)
\Edge[label = \tiny {}, labelstyle={auto=right, fill=none}](v12)(v1)
\Edge[label = \tiny {}, labelstyle={auto=right, fill=none}](v12)(v10)
\Edge[label = \tiny {}, labelstyle={auto=right, fill=none}](v13)(v10)
\Edge[label = \tiny {}, labelstyle={auto=right, fill=none}](v13)(v12)
\Edge[label = \tiny {}, labelstyle={auto=right, fill=none}](v14)(v15)
\Edge[label = \tiny {}, labelstyle={auto=right, fill=none}](v14)(v16)
\Edge[label = \tiny {}, labelstyle={auto=right, fill=none}](v15)(v16)
\Edge[label = \tiny {}, labelstyle={auto=right, fill=none}](v16)(v11)
\Edge[label = \tiny {}, labelstyle={auto=right, fill=none}](v17)(v18)
\Edge[label = \tiny {}, labelstyle={auto=right, fill=none}](v17)(v19)
\Edge[label = \tiny {}, labelstyle={auto=right, fill=none}](v18)(v19)
\Edge[label = \tiny {}, labelstyle={auto=right, fill=none}](v19)(v10)
\end{tikzpicture}
\caption{The construction when $k=5$ and $m=2$.}
\end{figure}


\begin{figure}
\centering
\begin{tikzpicture}[scale = 10]
\tikzstyle{VertexStyle} = []
\tikzstyle{EdgeStyle} = []
\tikzstyle{labeledStyle}=[shape = circle, minimum size = 6pt, inner sep = 1.2pt, draw]
\tikzstyle{unlabeledStyle}=[shape = circle, minimum size = 6pt, inner sep = 1.2pt, draw, fill]
\tikzstyle{NotQKkMinusOneStyle}=[shape = circle,minimum size = 6pt,inner sep = 1.2pt,draw]
\tikzstyle{Qstyle}=[shape = circle,minimum size = 6pt,inner sep = 1.2pt,draw]
\tikzstyle{endblockStyle}=[shape = circle,minimum size = 6pt,inner sep = 1.2pt,draw]
\Vertex[style = NotQKkMinusOneStyle, x = 0.650, y = 0.550, L = \tiny {}]{v0}
\Vertex[style = NotQKkMinusOneStyle, x = 0.750, y = 0.550, L = \tiny {}]{v1}
\Vertex[style = NotQKkMinusOneStyle, x = 0.650, y = 0.650, L = \tiny {}]{v2}
\Vertex[style = NotQKkMinusOneStyle, x = 0.750, y = 0.650, L = \tiny {}]{v3}
\Vertex[style = Qstyle, x = 0.450, y = 0.550, L = \tiny {}]{v4}
\Vertex[style = NotQKkMinusOneStyle, x = 0.550, y = 0.550, L = \tiny {}]{v5}
\Vertex[style = NotQKkMinusOneStyle, x = 0.450, y = 0.450, L = \tiny {}]{v6}
\Vertex[style = NotQKkMinusOneStyle, x = 0.550, y = 0.450, L = \tiny {}]{v7}
\Vertex[style = endblockStyle, x = 0.350, y = 0.400, L = \tiny {}]{v8}
\Vertex[style = endblockStyle, x = 0.300, y = 0.300, L = \tiny {}]{v9}
\Vertex[style = endblockStyle, x = 0.250, y = 0.400, L = \tiny {}]{v10}
\Vertex[style = endblockStyle, x = 0.600, y = 0.250, L = \tiny {}]{v11}
\Vertex[style = endblockStyle, x = 0.500, y = 0.250, L = \tiny {}]{v12}
\Vertex[style = endblockStyle, x = 0.550, y = 0.350, L = \tiny {}]{v13}
\Vertex[style = endblockStyle, x = 0.500, y = 0.800, L = \tiny {}]{v14}
\Vertex[style = endblockStyle, x = 0.450, y = 0.700, L = \tiny {}]{v15}
\Vertex[style = endblockStyle, x = 0.550, y = 0.700, L = \tiny {}]{v16}
\Vertex[style = NotQKkMinusOneStyle, x = 0.850, y = 0.450, L = \tiny {}]{v17}
\Vertex[style = NotQKkMinusOneStyle, x = 0.950, y = 0.450, L = \tiny {}]{v18}
\Vertex[style = NotQKkMinusOneStyle, x = 0.850, y = 0.550, L = \tiny {}]{v19}
\Vertex[style = Qstyle, x = 0.950, y = 0.550, L = \tiny {}]{v20}
\Vertex[style = endblockStyle, x = 1.150, y = 0.400, L = \tiny {}]{v21}
\Vertex[style = endblockStyle, x = 1.100, y = 0.300, L = \tiny {}]{v22}
\Vertex[style = endblockStyle, x = 1.050, y = 0.400, L = \tiny {}]{v23}
\Vertex[style = endblockStyle, x = 0.900, y = 0.250, L = \tiny {}]{v24}
\Vertex[style = endblockStyle, x = 0.800, y = 0.250, L = \tiny {}]{v25}
\Vertex[style = endblockStyle, x = 0.850, y = 0.350, L = \tiny {}]{v26}
\Vertex[style = endblockStyle, x = 0.900, y = 0.800, L = \tiny {}]{v27}
\Vertex[style = endblockStyle, x = 0.850, y = 0.700, L = \tiny {}]{v28}
\Vertex[style = endblockStyle, x = 0.950, y = 0.700, L = \tiny {}]{v29}
\Edge[label = \tiny {}, labelstyle={auto=right, fill=none}](v0)(v5)
\Edge[label = \tiny {}, labelstyle={auto=right, fill=none}](v1)(v0)
\Edge[label = \tiny {}, labelstyle={auto=right, fill=none}](v1)(v2)
\Edge[label = \tiny {}, labelstyle={auto=right, fill=none}](v1)(v3)
\Edge[label = \tiny {}, labelstyle={auto=right, fill=none}](v1)(v19)
\Edge[label = \tiny {}, labelstyle={auto=right, fill=none}](v2)(v0)
\Edge[label = \tiny {}, labelstyle={auto=right, fill=none}](v3)(v0)
\Edge[label = \tiny {}, labelstyle={auto=right, fill=none}](v3)(v2)
\Edge[label = \tiny {}, labelstyle={auto=right, fill=none}](v5)(v4)
\Edge[label = \tiny {}, labelstyle={auto=right, fill=none}](v5)(v6)
\Edge[label = \tiny {}, labelstyle={auto=right, fill=none}](v5)(v7)
\Edge[label = \tiny {}, labelstyle={auto=right, fill=none}](v6)(v4)
\Edge[label = \tiny {}, labelstyle={auto=right, fill=none}](v6)(v8)
\Edge[label = \tiny {}, labelstyle={auto=right, fill=none}](v7)(v4)
\Edge[label = \tiny {}, labelstyle={auto=right, fill=none}](v7)(v6)
\Edge[label = \tiny {}, labelstyle={auto=right, fill=none}](v8)(v9)
\Edge[label = \tiny {}, labelstyle={auto=right, fill=none}](v8)(v10)
\Edge[label = \tiny {}, labelstyle={auto=right, fill=none}](v9)(v10)
\Edge[label = \tiny {}, labelstyle={auto=right, fill=none}](v11)(v12)
\Edge[label = \tiny {}, labelstyle={auto=right, fill=none}](v11)(v13)
\Edge[label = \tiny {}, labelstyle={auto=right, fill=none}](v12)(v13)
\Edge[label = \tiny {}, labelstyle={auto=right, fill=none}](v13)(v7)
\Edge[label = \tiny {}, labelstyle={auto=right, fill=none}](v14)(v15)
\Edge[label = \tiny {}, labelstyle={auto=right, fill=none}](v14)(v16)
\Edge[label = \tiny {}, labelstyle={auto=right, fill=none}](v15)(v16)
\Edge[label = \tiny {}, labelstyle={auto=right, fill=none}](v16)(v2)
\Edge[label = \tiny {}, labelstyle={auto=right, fill=none}](v18)(v17)
\Edge[label = \tiny {}, labelstyle={auto=right, fill=none}](v18)(v19)
\Edge[label = \tiny {}, labelstyle={auto=right, fill=none}](v18)(v20)
\Edge[label = \tiny {}, labelstyle={auto=right, fill=none}](v19)(v17)
\Edge[label = \tiny {}, labelstyle={auto=right, fill=none}](v20)(v17)
\Edge[label = \tiny {}, labelstyle={auto=right, fill=none}](v20)(v19)
\Edge[label = \tiny {}, labelstyle={auto=right, fill=none}](v21)(v22)
\Edge[label = \tiny {}, labelstyle={auto=right, fill=none}](v21)(v23)
\Edge[label = \tiny {}, labelstyle={auto=right, fill=none}](v22)(v23)
\Edge[label = \tiny {}, labelstyle={auto=right, fill=none}](v24)(v25)
\Edge[label = \tiny {}, labelstyle={auto=right, fill=none}](v24)(v26)
\Edge[label = \tiny {}, labelstyle={auto=right, fill=none}](v25)(v26)
\Edge[label = \tiny {}, labelstyle={auto=right, fill=none}](v26)(v17)
\Edge[label = \tiny {}, labelstyle={auto=right, fill=none}](v23)(v18)
\Edge[label = \tiny {}, labelstyle={auto=right, fill=none}](v27)(v28)
\Edge[label = \tiny {}, labelstyle={auto=right, fill=none}](v27)(v29)
\Edge[label = \tiny {}, labelstyle={auto=right, fill=none}](v28)(v29)
\Edge[label = \tiny {}, labelstyle={auto=right, fill=none}](v28)(v3)
\end{tikzpicture}
\caption{The construction when $k=5$ and $m=3$.}
\end{figure}


		For $k \ge 7$ and $T \in \T_k$, we have
		\[2\size{T} \le \parens{k-3 + \frac{4(k-1)}{k^2 - 3k + 4}}\card{T} - \frac{4(k^2-3k+2)}{k^2-3k+4} + 
		\frac{k^2 - 3k}{k^2-3k+4}q(T).\]
\end{lem}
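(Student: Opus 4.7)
The plan is to derive this bound as an immediate specialization of Lemmas~\ref{BoundFamilyWithoutKKMinusOne} and~\ref{BoundFamilyWithKKMinusOne} with the parameter choices
\[p(k) = \frac{4(k-1)}{k^2-3k+4}, \qquad f(k) = -\frac{4(k^2-3k+2)}{k^2-3k+4}, \qquad h(k) = \frac{k^2-3k}{k^2-3k+4}.\]
With these values the right-hand side of either lemma matches the stated bound exactly (in the case $K_{k-1}\not\subseteq T$ we have $q(T)=0$, so the $h(k)q(T)$ term harmlessly drops out). Thus the whole proof reduces to verifying the numerical hypotheses of both lemmas for $k\ge 7$, after splitting on whether $K_{k-1}\subseteq T$.

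If $K_{k-1}\not\subseteq T$, I would apply Lemma~\ref{BoundFamilyWithoutKKMinusOne}. Condition~(1) is actually an equality here: $-f(k)/(k-2) = 4(k-1)/(k^2-3k+4) = p(k)$. Condition~(4), $p(k)\ge 3/(k-2)$, clears to $(k-4)(k+1)\ge 0$, and condition~(3), $-k+2 \le f(k) \le 0$, clears to $k^2-7k+8\ge 0$; both hold for $k\ge 7$. Condition~(2) reduces, after clearing denominators, to $4(k-1)(k-7) \le 5(k-5)(k^2-3k+4)$; the left side vanishes at $k=7$ and grows only quadratically in $k$, while the right side grows cubically, so the inequality is easily verified.

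If instead $K_{k-1}\subseteq T$, I would apply Lemma~\ref{BoundFamilyWithKKMinusOne}. Two of its five hypotheses turn out to be tight with this parameter choice: expanding $(k-1)(1-p(k)-h(k))$ yields exactly $f(k)$, pinning condition~(1), while $(k-1)p(k)+(k-3)h(k)$ simplifies to $(k^3-2k^2+k+4)/(k^2-3k+4) = k+1$, pinning condition~(5). The remaining conditions (2), (3), (4) reduce respectively to $(k-4)(k+1)\ge 0$, $(k-2)(k-3)(k-4)\ge 0$, and $k(k-3)\ge 0$, all easily satisfied for $k\ge 7$.

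Nothing here is deep; the main (minor) obstacle is just the algebraic bookkeeping of reducing each hypothesis to a polynomial inequality and checking that $k\ge 7$ is the right threshold. The most illuminating aspect is that three of the nine hypotheses across the two lemmas simultaneously become equalities with our choice of parameters, and it is precisely this triple of equations that forces the specific rational-function shape of $(p,f,h)$ displayed above; in other words, the Kostochka--Stiebitz coefficients are the unique ones that saturate the joint system of constraints imposed by the two lemmas.
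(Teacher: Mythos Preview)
Your derivation is correct, and it actually goes beyond what the paper does. The paper does not prove this lemma at all: it is stated with attribution to Kostochka and Stiebitz and the phrase ``If we put a similar bound of Kostochka and Stiebitz into this form, we get the following,'' meaning the authors are simply restating a known result from the cited paper in a form comparable to their own Corollary~\ref{SmallP}. No argument is given.

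Your approach---specializing Lemmas~\ref{BoundFamilyWithoutKKMinusOne} and~\ref{BoundFamilyWithKKMinusOne} with the indicated choice of $(p,f,h)$---is a genuine self-contained proof within the paper's own framework, and your verification of the hypotheses is accurate. In particular, your observation that conditions~(1) and~(5) of Lemma~\ref{BoundFamilyWithKKMinusOne} and condition~(1) of Lemma~\ref{BoundFamilyWithoutKKMinusOne} all hold with equality is correct and is exactly what pins down these specific coefficients. Your check of condition~(3) of Lemma~\ref{BoundFamilyWithoutKKMinusOne}, reducing to $k^2-7k+8\ge 0$, is also correct; note this is the step that genuinely requires $k\ge 6$ (it fails at $k=5$), so the restriction $k\ge 7$ in the statement is not an artifact of your method. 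What your derivation buys is that the Kostochka--Stiebitz bound is seen to be a (suboptimal) point in the same parametric family as the paper's sharper Corollary~\ref{SmallP}, rather than an unrelated external input.
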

\noindent
Note that $\frac{3k-5}{(k-5)(k-1)} < \frac{4(k-1)}{k^2 - 3k + 4}$ for $k \ge 7$.

In Section~\ref{discharging}, we will see that the bound we get on $d(G)$ is
primarily a function of the $p(k)$ with which we apply
Lemma~\ref{BoundFamilyWithKKMinusOne}: the smaller $p(k)$ is, the better bound
we get on $d(G)$.  So it useful to note that the choice of $p(k)$ in
Corollary~\ref{SmallP} is best possible.  We now give a construction to prove
this.  Let $X$ be a $K_{k-1}$ with $k-3$ pendant edges.  At the end of each
pendant edge, put a $K_{k-2}$.  Make a path of copies of $X$ by adding one edge
between the $K_{k-1}$ in each copy of $X$ (in the only way possible to keep the
degrees at most $k-1$).  Let $T$ be the path made like this from $m$ copies of
$X$.  Now $q(T) = 2$ (from the copies of $K_{k-1}$ at the ends of the path), so
if $T$ satisfies the bound in Lemma~\ref{BoundFamilyWithKKMinusOne}, then we
must have
\begin{align*}
&m((k-1)(k-2) + (k-3)(k-2)(k-3) + 2(k-3)) + 2(m-1) \\
\le &(k-3 + p(k))m(k-1+(k-2)(k-3)) + f(k) + 2h(k),
\end{align*}
which reduces to
\[m(k-1) + 2m(k-3) + 2(m-1) \le m(k-1+(k-2)(k-3))p(k) + f(k) + 2h(k).\]
Now solving for $p(k)$ gives
\[p(k) \ge \frac{m(k-1)+2m(k-3)+2(m-1)-f(k)-2h(k)}{m((k-1)+(k-2)(k-3))},\] 
which simplifies to
\[p(k) \ge \frac{3k - 5}{k^2 - 4k + 5}-\frac{2+f(k)+2h(k)}{m(k^2-4k+5)}.\]
Since we can make $m$ arbitrarily large, this implies $p(k)\ge \frac{3k-5}{k^2-4k+5}$, as desired.

\section{Discharging}\label{discharging}

\subsection{Overview and Discharging Rules}
\label{discharging-overview}

Now we use the discharging method, together with the edge bound lemmas of the
previous section, to give an improved bound on $d(G)$ for every $k$-critical
graph $G$.  It is helpful to view our proof here as a refinement and
strengthening of the proof of Gallai's bound, in Section~\ref{sec:gallai}.  For
$T \in \T_k$, let $W^k(T)$\aside{$W^k(T)$} be the set of vertices of $T$ that
are contained in some $K_{k-1}$ in $T$.  For a $k$-AT-critical graph $G$, let
$\L(G)$\aside{$\L(G)$} denote the subgraph of $G$ induced on the
$(k-1)$-vertices and $\HH(G)$\aside{$\HH(G)$} the subgraph of $G$ induced on
the $k$-vertices.     

Note that in the proof of Gallai's bound, all $(k+1)^+$-vertices finish with
extra charge; $(k+1)$-vertices have extra charge almost 1 and vertices of
higher degree have even more.  Our idea to improve the bound on $d(G)$ is to
have the $k$-vertices give slightly less charge, $\epsilon$, to their
$(k-1)$-neighbors.  Now all $k^+$-vertices finish with extra charge.  But
components of $\L(G)$ have less charge, so we need to give them more charge
from $(k+1)^+$-neighbors.  How much charge will each component $T$ of $\L(G)$
receive? This depends on $||T||$.  If $||T||$ is small, then $T$ has many
external neighbors, so $T$ will receive lots of charge.  If $||T||$ is large,
then Lemma 3.2 implies that $q(T)$ is also large.  So our plan is to send
charge $\gamma$ to $T$ via each edge incident to a vertex in $W^k(T)$, i.e.,
one counted by $q(T)$.  (For comparison with Gallai's bound, we will have
$\epsilon < \frac{k-1}{k^2-3} < \gamma$.)  If such an incident edge ends at a
$(k+1)^+$-vertex $v$, then $v$ will still finish with sufficient charge.  Our
concern, of course, is that a $k$-vertex will give charge $\gamma$ to too many
vertices in $W^k(T)$.  We would like to prove that each $k$-vertex has only a
few neighbors in $W^k(T)$.  Unfortunately, we believe this is false. However,
something similar is true.  We can assign each $k$-vertex to ``sponsor'' some
adjacent vertices in $W^k(T)$, so that each
$k$-vertex sponsors at most 3 such neighbors, and in each component $T$ of
$\L(G)$ at most two vertices in $W^k(T)$ go unsponsored.  This is an
immediate consequence of Lemma \ref{MultipleHighConfigurationEuler}, which says
that the auxiliary bipartite graph $\B_k(G)$, defined in 
the next paragraph, is 2-degenerate.  Now we give the details.

Let $\B_k(G)$\aside{$\B_k(G)$} be the bipartite graph with one part $V(\HH(G))$
and the other part the components of $\L(G)$.  Put an edge between $y \in
V(\HH(G))$ and a component $T$ of $\L(G)$ if and only if $N(y) \cap W^k(T) \ne
\emptyset$.  Now Lemma~\ref{MultipleHighConfigurationEuler} says that $\B_k(G)$
is $2$-degenerate.
Let $\epsilon$ and $\gamma$\aside{$\epsilon, \gamma$} be parameters, to be
chosen. Our initial charge function is $\ch(v) = d_G(v)$.  We
redistribute charge according to the following rules, applied successively.  
\begin{enumerate}
	\item Each $k^+$-vertex gives charge $\epsilon$ to each of its $(k-1)$-neighbors not in a $K_{k-1}$.
	\item Each $(k+1)^+$-vertex give charge $\gamma$ to each of its $(k-1)$-neighbors in a $K_{k-1}$.
	\item Let $Q = \B_k(G)$.  Repeat the following steps until $Q$ is empty.
	  \begin{enumerate}
	  	\item For each component $T$ of $\L(G)$ in $Q$ with degree at most two in $Q$ do the following:
	  	    \begin{enumerate}
	  	    	\item For each $v \in V(\HH(G)) \cap V(Q)$ such that $\card{N_G(v) \cap W^k(T)} = 2$, pick one $x \in N_G(v) \cap W^k(T)$ and send charge $\gamma$ from $v$ to $x$,
	  	    	\item Remove $T$ from $Q$.
	  	    \end{enumerate}
	  	\item For each vertex $v$ of $\HH(G)$ in $Q$ with degree at most two in $Q$ do the following:  
	  	    \begin{enumerate}
	  	        \item Send charge $\gamma$ from $v$ to each $x \in N_G(v) \cap W^k(T)$ for each component $T$ of $\L(G)$ where $vT \in E(Q)$.
	  	        \item Remove $v$ from $Q$.
	        \end{enumerate}
	  \end{enumerate}
	\item Have the vertices in each component of $\L(G)$ share their total charge equally.
\end{enumerate}

First, note that Step 3 will eventually result in Q being empty.  This is
because $\B_k(G)$ is 2-degenerate, as shown in
Lemma~\ref{MultipleHighConfigurationEuler}.  Next, consider a $k$-vertex $v$. 
In (3bi) $v$ gives away $\gamma$ to each neighbor in at most two components of
$\L(G)$.  So it is important that $v$ have few neighbors in these components. 
Fortunately, this is true.  By Lemma~\ref{ConfigurationTypeOneEuler}, $v$ has
at most 2 neighbors in any component of $\L(G)$.  Further, $v$ has at most one
component in which  it has 2 neighbors.  Thus, in (3ai) and (3bi), $v$ gives
away a total of at most $3\gamma$.  Finally, consider a component $T$.  In
(3bi), $T$ receives charge $\gamma$ via every edge incident in $\B_k(G)$,
except possibly two (that are still present when $v$ is deleted in (3aii)). 
Again, by Lemma~\ref{MultipleHighConfigurationEuler}, no such $v$ has three
neighbors in $T$.  Further, combining this with Steps (2) and (3ai), $T$
receives $\gamma$ along all but at most two incident edges leading to
$k$-vertices.  Thus, $T$ receives charge at least $\gamma(q(T)-2)$ in Steps (2)
and (3).

\subsection{Analyzing the Discharging and the Main Result}
\label{discharging-analyzing}
Here we analyze the charge received by each component $T$ of $\L(G)$.  We
choose $\epsilon$ and $\gamma$ to maximize the minimum, over all vertices, of
the final charge.  The following theorem is the main result of this paper.

\begin{thm}\label{UberTheorem}
	Let $k \ge 7$ and $\func{p}{\IN}{\IR}$, $\func{f}{\IN}{\IR}$, $\func{h}{\IN}{\IR}$.  If $G$ is a $k$-AT-critical graph, and $G\ne K_k$, then 
	\[d(G) \ge k-1 + \frac{2-p(k)}{k+2 + 3h(k) - p(k)},\]
	whenever $p$, $f$, and $h$ satisfy all of the following conditions:
	\begin{enumerate}
	\item $f(k) \ge (k-1)(1- p(k) - h(k))$; and	
	    \item $p(k) \ge \frac{3}{k-2}$; and
		\item $p(k) \ge h(k) + 5 - k$; and
		\item $p(k) \ge \frac{2+h(k)}{k-2}$; and
		\item $(k-1)p(k) + (k-3)h(k) \ge k+1$; and
		\item $2(h(k) + 1) + f(k) \le 0$; and
		\item $p(k) + (k-5)h(k) \le k+1$.
	\end{enumerate}
\end{thm}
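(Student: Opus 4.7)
My plan is to execute the discharging scheme of Section~\ref{discharging-overview} with the parameter choices
\[
\epsilon = \frac{1}{k+2+3h(k)-p(k)}, \qquad \gamma = (h(k)+1)\epsilon,
\]
and to verify that every vertex ends with charge at least $k-1+C$, where $C := \frac{2-p(k)}{k+2+3h(k)-p(k)}$. Since discharging conserves total charge, this yields $2\|G\| = \sum_v d_G(v) = \sum_v \ch^*(v) \ge (k-1+C)|G|$, giving the theorem. Along the way one checks $\gamma\ge\epsilon$ (equivalently $h(k)\ge 0$), which holds in the parameter range of interest as a short consequence of~(5) and which identifies the worst cases below.

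For a $k$-vertex $v$, Rule~(1) sends $\epsilon$ per $(k-1)$-neighbor outside $K_{k-1}$ and Rule~(3) sends at most $3\gamma$ (as established in Section~\ref{discharging-overview}); because $\gamma\ge\epsilon$, the worst case is $v$ giving away $(k-3)\epsilon+3\gamma$, and a direct algebraic simplification shows $k-(k-3)\epsilon-3\gamma = k-1+C$ exactly, so the choice of $\epsilon,\gamma$ is pinned down by this constraint. For a $(k+1)^+$-vertex $v$ of degree $d$, Rules~(1) and~(2) each send at most $\gamma$ per neighbor, so $\ch^*(v) \ge d(1-\gamma) \ge (k+1)(1-\gamma)$; the inequality $(k+1)(1-\gamma)\ge k-1+C$ rearranges precisely to hypothesis~(7), namely $k+1\ge p(k)+(k-5)h(k)$.

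For a component $T$ of $\L(G)$, a brief structural observation shows that a vertex of $W^k(T)$ has exactly one external edge when it is a non-cut vertex of a $K_{k-1}$-block and zero otherwise, so $T$ has exactly $q(T)$ external edges incident to $W^k(T)$ and $(k-1)|T|-2\|T\|-q(T)$ incident to $T\setminus W^k(T)$. Rule~(1) delivers $\epsilon$ on each edge of the latter type, and Rules~(2)--(3) together with the sponsorship analysis deliver $\gamma$ on all but at most two edges of the former. Adding the initial charge $(k-1)|T|$, dividing by $|T|$, and applying Lemma~\ref{BoundFamilyWithKKMinusOne} (whose hypotheses are precisely (1)--(5)) to upper bound $2\|T\|$, each vertex of $T$ finishes with charge at least
\[
(k-1)+\epsilon(2-p(k))+\frac{q(T)}{|T|}\bigl[\gamma-\epsilon(h(k)+1)\bigr]-\frac{\epsilon f(k)+2\gamma}{|T|}.
\]
The choice $\gamma=(h(k)+1)\epsilon$ annihilates the $q(T)/|T|$ coefficient, and hypothesis~(6) gives $\epsilon f(k)+2\gamma = \epsilon\bigl(f(k)+2(h(k)+1)\bigr)\le 0$, so the $1/|T|$ term is nonnegative; this yields $\ch^*(v)\ge k-1+C$ as needed. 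The case $K_{k-1}\not\subseteq T$ is handled identically using Lemma~\ref{BoundFamilyWithoutKKMinusOne}, since then $q(T)=0$ and Rules~(2)--(3) contribute nothing to $T$.

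The main obstacle is the matching of the seven hypotheses to their roles: (1)--(5) feed Lemma~\ref{BoundFamilyWithKKMinusOne}; (6) kills the $1/|T|$ error for small components; (7) is exactly the condition that makes the $(k+1)^+$-vertex bound sufficient. The choice $\gamma=(h(k)+1)\epsilon$ is forced by the desire to make the per-vertex charge on $\L(G)$-components independent of $q(T)$, and $\epsilon$ is then pinned by the (simultaneously tight) $k$-vertex constraint, so that both the $k$-vertex and the worst $\L(G)$-component bounds exactly meet $k-1+C$.
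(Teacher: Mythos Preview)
Your proof is essentially the paper's own argument: the same discharging rules from Section~\ref{discharging-overview}, the same choices $\epsilon=\frac{1}{k+2+3h(k)-p(k)}$ and $\gamma=(h(k)+1)\epsilon$, the same use of Lemmas~\ref{BoundFamilyWithoutKKMinusOne} and~\ref{BoundFamilyWithKKMinusOne} to bound $2\|T\|$, and the same identification of hypotheses~(6) and~(7) as the constraints that make the low-component and $(k+1)^+$-vertex inequalities go through.

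One small inaccuracy worth flagging: you assert that $h(k)\ge 0$ (equivalently $\gamma\ge\epsilon$) is ``a short consequence of~(5)'', but conditions (1)--(7) do not force $h(k)\ge 0$; for instance $k=7$, $p(k)=1.8$, $h(k)=-0.1$, $f(k)=-4.2$ satisfies all seven.  You do need $\gamma\ge\epsilon$ exactly where you use it---to conclude that a $k$-vertex loses at most $(k-3)\epsilon+3\gamma$ rather than $k\epsilon$, and that a $(k+1)^+$-vertex loses at most $\gamma$ per neighbor---and the paper's proof uses the same inequality at the same places without ever checking it either.  So this is a shared minor gap, not a flaw particular to your write-up, and it is harmless in every intended application (Corollaries~\ref{MainCor} and~\ref{MinorCor}), where $h(k)>0$.
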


Before we prove Theorem~\ref{UberTheorem}, we show that two previous results on
this problem follow immediately from this theorem.  Note that conditions
(1)--(5) are the hypotheses of Lemma~\ref{BoundFamilyWithKKMinusOne}.  As a
first test, let $p(k) = 1 - \frac{2}{k-1}$, $f(k) = -2$ and $h(k) = 0$.  Now
the hypotheses of Theorem \ref{UberTheorem} are satisfied when $k\ge7$, and we
get Gallai's bound: $d(G) \ge k-1 + \frac{k-3}{k^2-3}$.  Next, let's use the
Kostochka--Stiebitz bound, that is, $p(k) = \frac{4(k-1)}{k^2 - 3k + 4}$, $f(k)
= -\frac{4(k^2-3k+2)}{k^2-3k+4}$ and $h(k) = \frac{k^2 - 3k}{k^2-3k+4}$. 
Again, the hypotheses of Theorem \ref{UberTheorem} are satisfied when $k \ge 7$
and we get
\[d(G) \ge k-1 + \frac{2(k-2)(k-3)}{(k-1)(k^2 + 3k - 12)}.\]
This is exactly the bound in the paper of Kierstead and the second
author~\cite{OreVizing}.  

Finally, to get our sharpest bound on $d(G)$, we use the bound in Corollary
\ref{SmallP}, that is, $p(k) = \frac{3k-5}{k^2 - 4k + 5}$, $f(k) =
-\frac{2(k-1)(2k-5)}{k^2 - 4k + 5}$, and $h(k) = \frac{k(k-3)}{k^2 - 4k + 5}$. 
The hypotheses of Theorem \ref{UberTheorem} are satisfied when $k\ge7$ and we
get $d(G) \ge k-1 + \frac{(k-3)(2k-5)}{k^3 + k^2 - 15k + 15}.$ This is better
than the bound in~\cite{OreVizing} for $k \ge 7$.  We record this as our main
corollary.

\begin{cor}\label{MainCor}
If $G$ is a $k$-AT-critical graph, with $k\ge 7$, and $G\ne K_k$, then
 \[d(G) \ge k-1 + \frac{(k-3)(2k-5)}{k^3 + k^2 - 15k + 15}.\]
\end{cor}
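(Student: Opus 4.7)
The plan is to apply Theorem~\ref{UberTheorem} directly with the triple $(p(k), f(k), h(k))$ that minimizes $p(k)$ in Lemma~\ref{BoundFamilyWithKKMinusOne}, namely the values from Corollary~\ref{SmallP}: $p(k)=\frac{3k-5}{k^2-4k+5}$, $f(k)=-\frac{2(k-1)(2k-5)}{k^2-4k+5}$, and $h(k)=\frac{k(k-3)}{k^2-4k+5}$. Since Theorem~\ref{UberTheorem} already delivers $d(G) \ge k-1 + \frac{2-p(k)}{k+2+3h(k)-p(k)}$ whenever its seven listed conditions hold, the corollary follows once we verify those conditions and simplify the resulting fraction.

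For the verification, conditions (1)--(5) of Theorem~\ref{UberTheorem} coincide exactly with the hypotheses of Lemma~\ref{BoundFamilyWithKKMinusOne}, so the same check used to derive Corollary~\ref{SmallP} applies verbatim. That leaves the two additional conditions (6) $2(h(k)+1)+f(k) \le 0$ and (7) $p(k)+(k-5)h(k) \le k+1$. I would place each over the common denominator $k^2-4k+5$. The numerator of $2(h(k)+1)+f(k)$ equals $2k(k-3) + 2(k^2-4k+5) - 2(k-1)(2k-5)$, which telescopes to $0$, so (6) holds with equality. For (7), clearing denominators and collecting terms reduces the inequality to $5k^2 - 17k + 10 \ge 0$, whose real roots are both less than $3$; this is satisfied comfortably for $k \ge 7$.

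For the simplification, writing $2-p(k)$ over $k^2-4k+5$ gives numerator $2k^2-11k+15$, which factors as $(k-3)(2k-5)$. Writing $k+2+3h(k)-p(k)$ over the same common denominator gives numerator $(k+2)(k^2-4k+5)+3k(k-3)-(3k-5)$, which expands to $k^3+k^2-15k+15$. Dividing yields exactly $\frac{(k-3)(2k-5)}{k^3+k^2-15k+15}$, as claimed.

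The main (and only minor) obstacle is verifying the two extra conditions (6) and (7), since these were not a priori built into the choice of parameters in Corollary~\ref{SmallP}. As shown above, (6) in fact holds with equality, and (7) collapses to a quadratic inequality that is very loose for $k \ge 7$. All the structural content resides in Theorem~\ref{UberTheorem} and Corollary~\ref{SmallP}; once the parameters are substituted, what remains is routine polynomial arithmetic.
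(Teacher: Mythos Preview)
Your proposal is correct and follows exactly the paper's approach: apply Theorem~\ref{UberTheorem} with the parameters $p(k)=\frac{3k-5}{k^2-4k+5}$, $f(k)=-\frac{2(k-1)(2k-5)}{k^2-4k+5}$, $h(k)=\frac{k(k-3)}{k^2-4k+5}$ from Corollary~\ref{SmallP}, note that (1)--(5) are already verified there, check (6) and (7), and simplify. Your verifications of (6) (with equality) and (7) (reducing to $5k^2-17k+10\ge 0$) and your simplification of the resulting fraction are all accurate, and in fact more explicit than what the paper records.
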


Now we prove Theorem~\ref{UberTheorem}
\begin{proof}[Proof of Theorem~\ref{UberTheorem}]
Our discharging procedure in the previous section gives charge $\epsilon$ to a component $T$ for every incident edge not ending in a $K_{k-1}$.  The number of such edges is exactly
\[-q(T) + \sum_{v \in V(T)} (k-1 - d_T(v)) = (k-1)\card{T} - 2\size{T} - q(T),\]
so we let $A(T)$ denote this quantity.  When $K_{k-1} \subseteq T$, since
(1)--(5) hold, Lemma~\ref{BoundFamilyWithKKMinusOne} gives
\[2\size{T} \le (k-3 + p(k))\card{T} + f(k) + h(k)q(T).\]
So, when $K_{k-1} \subseteq T$ we get
\begin{align*}
    A(T) & \ge (k-1)\card{T} - q(T) - ((k-3 + p(k))\card{T} + f(k) + h(k)q(T))\\
         &  =(2-p(k))\card{T} - f(k) - (h(k) + 1)q(T).
\end{align*}
Hence, in total $T$ receives charge at least
\begin{align*}
    \epsilon A(T) + \gamma(q(T) - 2) &\ge \epsilon(2-p(k))|T| - \epsilon f(k) - \epsilon (h(k)+1)q(T) +\gamma q(T)-2\gamma \\
    & = \epsilon(2-p(k))|T| + q(T)(\gamma - \epsilon (h(k)+1)) - (2\gamma + \epsilon f(k))
\end{align*}
Our goal is to make $\epsilon(2-p(k))$ as large as possible, while ensuring that the final two terms are nonnegative.  To make the second term 0, we let $\gamma = \epsilon(h(k) + 1)$.  Now the final term becomes $-\epsilon(2(h(k)+1)+f(k))$.
%
%
For simplicity, we have added, as (6), that $2(h(k) + 1) + f(k) \le 0$.  (Since we typically take $h(k) > 0$, as in Corollary~\ref{MainCor}, it is precisely this requirement that necessitates the use of $f(k)$ in Lemma~\ref{BoundFamilyWithKKMinusOne}.)  Thus, $T$ receives charge at least
\[\epsilon\parens{2-p(k)}\card{T},\]
so each of its vertices gets at least $\epsilon(2-p(k))$.
We also need each $k$-vertex to end with enough charge, and each of these loses at most $3\gamma+(k-3)\epsilon$.  So we take
\[1 - (3\gamma + (k-3)\epsilon) = \epsilon\parens{2-p(k)},\]
which gives
\[\epsilon = \frac{1}{k+2 + 3h(k) - p(k)},\]
\[\gamma = \frac{h(k)+1}{k+2 + 3h(k) - p(k)}.\]
Thus, after discharging, each $k$-vertex finishes with charge at least $k-1+\epsilon(2-p(k))$.  The same bound holds for each $(k-1)$-vertex in a component $T$ with a $K_{k-1}$.

When $K_{k-1} \not \subseteq T$, we have $q(T) = 0$.  Applying Lemma
\ref{BoundFamilyWithoutKKMinusOne} with $f(k) = 0$ and $p(k)$ as in the present
theorem, we get 
\[2\size{T} \le (k-3 + p(k))\card{T},\]
and hence
\[A(T) \ge (2-p(k))\card{T}.\]
So $T$ receives sufficient charge.

It remains to check that the $(k+1)^+$-vertices don't give away too much
charge.  Let $v$ be a $(k+1)^+$-vertex. Now $v$ ends with charge at least
\[d(v) - \gamma d(v) = (1-\gamma)d(v) \ge (1-\gamma)(k+1) = (k+1)\frac{k+1 +
2h(k) - p(k)}{k+2 + 3h(k) - p(k)},\]
so we need to satisfy the inequality
\[(k+1)\frac{k+1 + 2h(k) - p(k)}{k+2 + 3h(k) - p(k)} \ge k-1 +
\frac{2-p(k)}{k+2 + 3h(k) - p(k)}.\]
This inequality reduces to
\[p(k) + (k-5)h(k) \le k+1.\]
For simplicity, we have added this as (7), since it is easily satisfied by the
$p$, $f$, and $h$ we want to use.
\end{proof}

The reason that we require $k\ge 7$ in Theorem~\ref{UberTheorem} (and
Corollary~\ref{MainCor}) is that the proof uses
Lemma~\ref{MultipleHighConfigurationEuler}.  However, for $k\in\{5,6\}$,
Lemma~\ref{MultipleHighConfigurationEulerLopsided} can play an analogous role. 
For $k\ge 7$, Lemma~\ref{MultipleHighConfigurationEuler} implies that if $G$
has no reducible configuration, then $B_k(G)$ is 2-degenerate.  For
$k\in\{5,6\}$, Lemma~\ref{MultipleHighConfigurationEulerLopsided} implies that
we can reduce $\B_k(G)$ to the empty graph by repeatedly deleting either a tree
component vertex $v$ with $d_{\B_k(G)}(v)\le 1$ or else a vertex $w$ in
$V(\B_k(G))\cap V(\HH(G))$ with $d_{\B_k(G)}(v)\le 3$.  Thus, in the
discharging, the tree corresponding to $v$ receives charge at least
$\gamma(q(T)-1)$ on edges ending at vertices in $W^k(T)$.  Similarly, each
$k$-vertex gives away charge at most $4\gamma+(k-4)\epsilon$.  Now, to find the
optimal value of $\epsilon$, as in the proof of Theorem~\ref{UberTheorem}, we
solve $(1-(4\gamma+\epsilon(k-4))=(2-p(k))\epsilon$.  This gives $\epsilon =
\frac1{k+2+4h(k)-p(k)}$ and, again, $\gamma=\epsilon(h(k)+1)$.  In place of
hypothesis (6), we have the slightly weaker requirement $h(k)+1+f(k)\le 0$. 
The result is the following theorem and corollary, for $k\in\{5,6\}$.

\begin{thm}\label{Uber56}
	Let $k \in\{5,6\}$ and $\func{p}{\IN}{\IR}$, $\func{f}{\IN}{\IR}$, $\func{h}{\IN}{\IR}$.  If $G$ is a $k$-AT-critical graph, and $G\ne K_k$, then 
	\[d(G) \ge k-1 + \frac{2-p(k)}{k+2 + 4h(k) - p(k)},\]
	whenever $p$, $f$, and $h$ satisfy all of the following conditions:
	\begin{enumerate}
	\item $f(k) \ge (k-1)(1- p(k) - h(k))$; and	
	    \item $p(k) \ge \frac{3}{k-2}$; and
		\item $p(k) \ge h(k) + 5 - k$; and
		\item $p(k) \ge \frac{2+h(k)}{k-2}$; and
		\item $(k-1)p(k) + (k-3)h(k) \ge k+1$; and
		\item $h(k) + 1 + f(k) \le 0$; and
		\item $p(k) + (k-5)h(k) \le k+1$.
	\end{enumerate}
\end{thm}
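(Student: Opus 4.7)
My plan is to mirror the proof of Theorem~\ref{UberTheorem}, making only those adjustments forced by the weaker reducibility statement that is available for $k\in\{5,6\}$. I would keep Rules~1, 2, and~4 of the discharging from Section~\ref{discharging-overview} verbatim, and modify the thresholds in Rule~3 so that at each iteration we remove either a component of $\L(G)$ of degree at most $1$ in $Q=\B_k(G)$ or a vertex of $\HH(G)$ of degree at most $3$. By Lemma~\ref{MultipleHighConfigurationEulerLopsided}, this process still empties~$Q$.

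Two accounting consequences then propagate through the analysis. First, each component $T$ of $\L(G)$ now receives charge $\gamma$ along all but at most one of its $\B_k(G)$-incident edges---the single possibly-missed edge being the one present when $T$ is removed at degree~$1$---so $T$ collects at least $\gamma(q(T)-1)$ via Rules~2 and~3. Second, each $k$-vertex sponsors at most $4$ vertices across the various $W^k(T)$-sets, so its total outflow under Rules~1--3 is bounded by $4\gamma+(k-4)\epsilon$.

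With these bounds in hand, the core computation for a component $T$ containing $K_{k-1}$ proceeds exactly as before: I would invoke Lemma~\ref{BoundFamilyWithKKMinusOne}, whose hypotheses (1)--(5) are retained verbatim, to bound $2\size{T}$. Setting $\gamma=\epsilon(h(k)+1)$ again kills the coefficient of $q(T)$ and leaves a residue of $-\epsilon(f(k)+h(k)+1)$; requiring this to be nonpositive gives exactly the new hypothesis~(6), $h(k)+1+f(k)\le 0$, which is weaker than the analogue in Theorem~\ref{UberTheorem} precisely because only one (rather than two) edge can be skipped. Solving $1-(4\gamma+(k-4)\epsilon)=\epsilon(2-p(k))$ then yields $\epsilon=1/(k+2+4h(k)-p(k))$ and the claimed lower bound. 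Components with $K_{k-1}\not\subseteq T$ are handled by Lemma~\ref{BoundFamilyWithoutKKMinusOne} with $f(k)=0$, as in the $k\ge 7$ case.

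Finally, I would verify that $(k+1)^+$-vertices retain enough charge by substituting the new values of $\epsilon$ and $\gamma$ into $(1-\gamma)d(v)\ge k-1+\epsilon(2-p(k))$. Routine algebra reduces this to $p(k)+(k-7)h(k)\le k+1$, which is implied by the stated hypothesis~(7) whenever $h(k)\ge 0$. The main obstacle is simply the careful bookkeeping demanded by the revised Rule~3---in particular confirming that the ``at most~$4$'' sponsorship bound and the ``at most one missed edge'' claim really do follow from Lemma~\ref{MultipleHighConfigurationEulerLopsided}---but no genuinely new estimate is required beyond what the $k\ge 7$ analysis already supplies.
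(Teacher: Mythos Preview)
Your proposal is correct and follows essentially the same route as the paper. The paper, too, replaces Lemma~\ref{MultipleHighConfigurationEuler} by Lemma~\ref{MultipleHighConfigurationEulerLopsided}, lowers the tree-component threshold in Rule~3 to~$1$ and raises the $k$-vertex threshold to~$3$, derives the bounds $\gamma(q(T)-1)$ for components and $4\gamma+(k-4)\epsilon$ for $k$-vertices, and then solves $1-(4\gamma+(k-4)\epsilon)=\epsilon(2-p(k))$ with $\gamma=\epsilon(h(k)+1)$ to obtain $\epsilon=1/(k+2+4h(k)-p(k))$; your explicit reduction of the $(k+1)^+$-vertex inequality to $p(k)+(k-7)h(k)\le k+1$ (implied by~(7) when $h(k)\ge 0$) is a detail the paper leaves implicit.
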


To get the best bound on $d(G)$, as in Theorem~\ref{UberTheorem}, we use $p(k)
= \frac{3k-5}{k^2 - 4k + 5}$, $f(k) = -\frac{2(k-1)(2k-5)}{k^2 - 4k + 5}$, and
$h(k) = \frac{k(k-3)}{k^2 - 4k + 5}$.
\begin{cor}\label{MinorCor}
If $G$ is a $k$-AT-critical graph, with $k\in\{5,6\}$, and $G\ne K_k$, then
 \[d(G) \ge k-1 + \frac{(k-3)(2k-5)}{k^3 + 2k^2 - 18k + 15}.\]
\end{cor}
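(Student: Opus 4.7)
The plan is to apply Theorem~\ref{Uber56} with the same three functions that were used in Corollary~\ref{MainCor}, namely
\[
p(k) = \frac{3k-5}{k^2 - 4k + 5}, \qquad f(k) = -\frac{2(k-1)(2k-5)}{k^2 - 4k + 5}, \qquad h(k) = \frac{k(k-3)}{k^2 - 4k + 5},
\]
and then to simplify the resulting rational expression.

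First I would check that these three functions satisfy hypotheses (1)--(7) of Theorem~\ref{Uber56} for $k\in\{5,6\}$. Conditions (1)--(5) are exactly the hypotheses of Lemma~\ref{BoundFamilyWithKKMinusOne}, and we have already verified in the discussion preceding Corollary~\ref{SmallP} that this choice of $(p,f,h)$ makes them hold whenever $k\ge 5$, so they hold for $k\in\{5,6\}$ in particular. Condition (7) is unchanged from Theorem~\ref{UberTheorem}, so the same verification carries over. The only condition I would need to check afresh is (6), which has become the weaker requirement $h(k)+1+f(k) \le 0$ instead of the $k\ge 7$ version $2(h(k)+1)+f(k)\le 0$. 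Plugging in:
\[
h(k) + 1 + f(k) = \frac{k(k-3) + (k^2-4k+5) - 2(k-1)(2k-5)}{k^2-4k+5} = \frac{-2k^2+5k}{k^2-4k+5},
\]
which is at most $0$ for $k\ge 3$; in particular for $k\in\{5,6\}$.

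With the hypotheses confirmed, I would then compute $2-p(k)$ and $k+2+4h(k)-p(k)$ over the common denominator $k^2-4k+5$. A direct expansion gives
\[
2-p(k) = \frac{(k-3)(2k-5)}{k^2-4k+5},
\]
while combining the terms in the denominator of Theorem~\ref{Uber56} yields
\[
k+2+4h(k)-p(k) = \frac{(k+2)(k^2-4k+5) + 4k(k-3) - (3k-5)}{k^2-4k+5} = \frac{k^3+2k^2-18k+15}{k^2-4k+5}.
\]
Taking the ratio cancels $k^2-4k+5$ and produces $(k-3)(2k-5)/(k^3+2k^2-18k+15)$, which is exactly the quantity stated in the corollary.

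There is no real obstacle here: the theorem has been designed to support precisely this application, and the only arithmetic that must be done is the routine simplification of the two rational functions. The one small point requiring explicit attention is condition (6), since Theorem~\ref{Uber56} relaxes it relative to Theorem~\ref{UberTheorem}; the computation above shows that the relaxed form is exactly what is needed to accommodate the same $(p,f,h)$ in the $k\in\{5,6\}$ regime.
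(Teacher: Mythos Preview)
Your proposal is correct and follows exactly the paper's approach: plug the functions $p(k)=\frac{3k-5}{k^2-4k+5}$, $f(k)=-\frac{2(k-1)(2k-5)}{k^2-4k+5}$, $h(k)=\frac{k(k-3)}{k^2-4k+5}$ into Theorem~\ref{Uber56} and simplify. One harmless arithmetic slip: the numerator of $h(k)+1+f(k)$ is $-2k^2+7k-5=-(2k-5)(k-1)$, not $-2k^2+5k$; either way it is nonpositive for $k\in\{5,6\}$, so condition~(6) still holds.
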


\section{Reducible Configurations}
In this section, we collect the three main reducibility lemmas used in the
proofs of Theorems~\ref{UberTheorem} and~\ref{Uber56}.  They were proved
in~\cite{OreVizing}.  Each lemma describes a class of reducible configurations,
and so restricts the structure of AT-critical-graphs.
The first says that no $k$-vertex has three or more neighbors in the same component
$T$ of $\L(G)$.  Further, for each $k$-vertex $v$,  at most one component $T$ of
$\L(G)$ has two neighbors of $v$.

\begin{lem}\label{ConfigurationTypeOneEuler}
Let $k \ge 5$ and let $G$ be a graph with $x \in V(G)$.  
Now $G$ is $f$-AT,
where $f(x) = d_G(x) - 1$ and $f(v) = d_G(v)$ for all $v \in V(G - x)$,
whenever all of the following hold:
\begin{enumerate}
\item $K_k \not \subseteq G$; and
\item $G-x$ has $t$ components $H_1, H_2, \ldots, H_t$, and all are in $\T_k$; and
\item $d_G(v) \leq k - 1$ for all $v \in V(G-x)$; and
\item $\card{N(x) \cap W^k(H_i)} \ge 1$ for $i \in \irange{t}$; and
\item $d_G(x) \ge t+2$.
\end{enumerate}

\end{lem}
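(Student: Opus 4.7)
The plan is to prove $G$ is $f$-AT by exhibiting an orientation $D$ of $G$ satisfying $d^+_D(v) + 1 \le f(v)$ for every $v$ and $EE(D) \ne EO(D)$; Lemma~\ref{AlonTarsi} then gives the conclusion. To set up, for each $i \in [t]$ I fix $w_i \in N(x) \cap W^k(H_i)$, available by~(4). Since $w_i$ lies in some $K_{k-1}$ block of $H_i$, $d_{H_i}(w_i) \ge k-2$; combined with~(3) and $w_ix \in E(G)$ this forces $d_{H_i}(w_i) = k-2$, so $w_i$ is contained in only that one block.

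The key ingredient is a sub-lemma: for any $H \in \T_k$ with a distinguished vertex $w$ lying in a $K_{k-1}$ block with $d_H(w) = k-2$, there is a DAG orientation $\tilde D$ of $H$ satisfying $d^+_{\tilde D}(v) \le d_H(v) - 1$ for $v \ne w$ and $d^+_{\tilde D}(w) \le k-2$. I would prove this by rooting the block-tree of $H$ at $w$'s $K_{k-1}$ block and orienting each block with its parent-side cut vertex on top---$w$ itself for its own block, and the cut vertex shared with the parent block for all subsequent blocks. Cliques are oriented with the top vertex pointing to the rest, while the remaining vertices are arranged as an acyclic tournament; odd cycles are oriented with ``both arcs out'' from the top toward a far sink. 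The constraint $d_H(v) \le k-1$ in $\T_k$ caps how much extra degree each cut vertex can accumulate from its child blocks, and a direct count verifies the claimed bounds. Since the resulting orientation is acyclic, $EE(\tilde D) = 1$ and $EO(\tilde D) = 0$.

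Using the sub-lemma I build $D$ on $G$: apply it to each $H_i$ to obtain $D_i$; orient each $xw_i$ outward from $x$; choose two additional neighbors $u_1, u_2$ of $x$ outside $\{w_1, \dots, w_t\}$---available by~(5)---and orient $u_j x$ into $x$; orient every remaining edge at $x$ outward. A routine calculation shows $d^+_D(v) + 1 \le f(v)$ for all $v$. To analyze $EE(D) - EO(D)$, observe that any Eulerian subgraph $E$ of $D$ has $a \DefinedAs d^-_E(x) = d^+_E(x) \in \{0, 1, 2\}$: the case $a = 0$ contributes exactly $+1$ (the empty subgraph, since each $D_i$ is acyclic); for $a \ge 1$, removing $x$ and its used edges from $E$ leaves a union of $a$ directed paths in the $D_i$'s, from sources in $x$'s out-neighborhood to sinks in $\{u_1, u_2\}$, with no cycles because each $D_i$ is a DAG.

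The main obstacle is showing that the signed path contributions for $a \ge 1$ do not cancel the $+1$ from $a = 0$. A direct computation in the case $G = K_k$ shows the construction there yields $EE(D) - EO(D) = 0$, consistent with $K_k$ not being $f$-AT, so hypothesis~(1) must play a decisive role here. I plan to exploit it via case analysis on $t$: when $t \ge 2$, picking $u_1 \in H_1$ and $u_2 \in H_2$ forces any $a = 2$ subgraph to split its two paths across distinct components, sharply restricting the combinations; when $t = 1$, hypothesis~(1) forces either $H_1 \ne K_{k-1}$ (yielding richer acyclic structure in $D_1$) or $N(x) \cap V(H_1) \subsetneq V(H_1)$, each of which breaks the parity balance visible in the $K_k$ example. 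Making this case analysis watertight---in particular, handling the interplay between $a = 1$ and $a = 2$ contributions---is the delicate step.
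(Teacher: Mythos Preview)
The paper does not prove this lemma; it is quoted from \cite{OreVizing}, so there is no in-paper argument to compare against. That said, your proposal has a concrete gap worth naming.

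The orientation you describe need not satisfy $EE(D)\ne EO(D)$. Take $k=5$, $t=1$, $H_1=K_4$ on $\{y_1,y_2,y_3,y_4\}$, with $x$ adjacent to exactly $y_1,y_2,y_3$. All five hypotheses hold (in particular $K_5\not\subseteq G$ since $xy_4\notin E(G)$, and $d_G(x)=3=t+2$). Choosing $w_1=y_1$, $u_1=y_2$, $u_2=y_3$, and orienting $H_1$ as the transitive tournament $y_1\to y_2\to y_3\to y_4$---a choice your sub-lemma certainly permits---gives $d^+_D(x)=1$, so every spanning Eulerian subdigraph has $a=d^-_E(x)=d^+_E(x)\le 1$. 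A direct enumeration yields exactly four of them: the empty subdigraph (even), the two directed triangles $x\,y_1\,y_2\,x$ and $x\,y_1\,y_3\,x$ (each odd), and the directed $4$-cycle $x\,y_1\,y_2\,y_3\,x$ (even). Hence $EE(D)=EO(D)=2$, and this orientation does not certify that $G$ is $f$-AT.

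So the problem is not merely that the $a\ge 1$ case analysis is ``delicate''; the construction itself is under-specified and at least one natural instantiation fails outright. Interestingly, in the same example, placing $y_4$ second in the transitive order (so $y_1\to y_4\to y_2\to y_3$) gives $EE(D)-EO(D)=1$, so the strategy is not hopeless. But to make it a proof you would have to prescribe, in general, how the structure guaranteed by $K_k\not\subseteq G$ is woven into the acyclic order on each block, and then carry out the signed path-count uniformly over all $T\in\T_k$. That is the heart of the lemma, and it is not yet on the page; the degree-bound verification and the sub-lemma on DAG orientations are the easy parts.
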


To describe reducible configurations with more than one $k$-vertex we need the
following auxiliary bipartite graph, which is a generalization of what we
defined in Section~\ref{discharging-overview}.  For a graph $G$, $\set{X, Y}$ a
partition of $V(G)$ and $k
\ge 4$, let $\B_k(X, Y)$ be the bipartite graph with one part $Y$ and the other
part the components of $G[X]$.  Put an edge between $y \in Y$ and a component
$T$ of $G[X]$ if and only if $N(y) \cap W^k(T) \ne \emptyset$.   The next lemma
tells us that we have a reducible configuration if this bipartite graph has
minimum degree at least three.  In other words, if we have no reducible
configuration, then $\B_k(X,Y)$ is 2-degenerate.

\begin{lem}
\label{MultipleHighConfigurationEuler} 
Let $k\ge7$ and let $G$ be a graph with $Y\subseteq V(G)$.  Now $G$ has an
induced subgraph $G'$ that is $f$-AT, where $f(y)=d_{G'}(y)-1$ for $y\in Y$ and
$f(v)=d_{G'}(v)$ for all $v\in V(G'-Y)$, whenever all of the following hold:
	\begin{enumerate}
		\item $K_{k}\not\subseteq G$; and 
		\item the components of $G-Y$ are in $\T_{k}$; and 
		\item $d_{G}(v)\leq k-1$ for all $v\in V(G-Y)$; and 
		\item with $\B\DefinedAs\B_{k}(V(G-Y),Y)$ we have $\delta(\B)\ge3$. 
	\end{enumerate}
\end{lem}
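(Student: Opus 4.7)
The plan is to exhibit an induced subgraph $G'$ of $G$ together with an orientation $D$ of $G'$ such that $EE(D)\ne EO(D)$, $d_D^+(y)\le d_{G'}(y)-2$ for each $y\in Y\cap V(G')$, and $d_D^+(v)\le d_{G'}(v)-1$ for every other vertex $v$. Lemma~\ref{AlonTarsi} (or its constructive counterpart, Lemma~\ref{Schauz}) will then yield the required $f$-AT certificate.

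First, I would use the hypothesis $\delta(\B)\ge 3$ to pin down a convenient substructure. In particular, for each component $T$ of $G-Y$ there exist at least three vertices $y_1,y_2,y_3\in Y$ with $N_G(y_i)\cap W^k(T)\ne\emptyset$; pick for each such component three ``sponsor'' edges $y_i x_i$ with $x_i\in W^k(T)$. Let $G'$ be the subgraph induced by $Y$ together with (enough of) the components of $G-Y$ so that the sponsor system is preserved; this choice of $G'$ is what will allow a clean decomposition in the final step.

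Second, I would build a local Alon--Tarsi orientation for each pair consisting of a Gallai tree $T$ and its three sponsor edges. Orient the three sponsor edges into $T$, each pointing at a vertex of the $K_{k-1}\subseteq T$ containing the corresponding $x_i$. Then orient the tree edges by peeling off blocks of $T$ from the leaves of the block-tree inward: in each endblock that is $K_t$ or an odd cycle, fix an acyclic orientation away from the cut vertex; for the $K_{k-1}$ holding the sponsor targets, orient so that each sponsored vertex already has an incoming sponsor edge from outside and can therefore afford full out-degree inside. The obstruction that a lone $K_{k-1}$ is not $d_0$-AT is removed precisely by these three extra in-edges, as can be verified by a direct computation of the coefficient of the appropriate monomial in the graph polynomial of $K_{k-1}$.

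Third, I would glue the local orientations into a global orientation $D$ of $G'$. Since the components of $G-Y$ are pairwise non-adjacent in $G-Y$, any spanning eulerian subdigraph of $D$ decomposes uniquely into its restrictions to the $Y$-incident edges and its restrictions to the interior of each component. Hence $EE(D)-EO(D)$ factors (with signs) as a product of local Alon--Tarsi differences, each nonzero by construction, so the global difference is nonzero. The hardest step is the second: producing an explicit Alon--Tarsi-good orientation for a Gallai tree $T$ with three sponsored in-edges at a chosen $K_{k-1}$. I plan to handle this by induction on the block-tree of $T$, exactly mirroring the endblock peeling of Lemmas~\ref{BoundFamilyWithoutKKMinusOne} and~\ref{BoundFamilyWithKKMinusOne}: the base case is $T=K_{k-1}$ with three sponsored vertices, verified by a direct polynomial computation, and the inductive step strips an endblock (a small clique or odd cycle), orients it acyclically toward its cut vertex, and applies induction. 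Given the constructive nature of Lemma~\ref{Schauz}, it is likely cleaner to phrase the induction as a kernel argument rather than tracking $EE-EO$ directly.
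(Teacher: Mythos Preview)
The paper does not prove this lemma; it is quoted from~\cite{OreVizing}, so there is no in-paper argument to compare against.  Evaluating your sketch on its own merits, there are two genuine gaps.

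First, you never arrange $d_D^-(y)\ge 2$ for $y\in Y$.  You orient every sponsor edge from $Y$ into its tree; this helps the tree side of the constraint but contributes nothing toward the requirement $d_D^+(y)\le d_{G'}(y)-2$.  Nothing else in your construction gives $y$ any in-edges: you do not discuss edges inside $G'[Y]$, and every cross-edge you orient points away from $Y$.  This is exactly where the hypothesis $d_{\B}(y)\ge 3$ for $y\in Y$ must enter, and your sketch never invokes it.  Relatedly, a single extra in-edge into a $K_{k-1}$ already suffices to break the $d_0$-obstruction there; your insistence on three suggests that two of the three should really be oriented \emph{toward} $Y$ to feed the in-degree of the $y$'s---but you do not say so, and once you do, the bookkeeping on both sides has to be redone simultaneously (this is where the full strength of $\delta(\B)\ge 3$, on \emph{both} parts, is needed).

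Second, the factorization claim is false as stated.  An eulerian subdigraph of $D$ need not restrict to eulerian subdigraphs on ``the $Y$-incident edges'' and on ``the interior of each component'': a boundary vertex $x\in W^k(T)$ can be balanced in $D$ while the restriction to the edges inside $T$ is unbalanced at $x$, the discrepancy being absorbed by cross-edges.  Hence $EE(D)-EO(D)$ does not split as a product of local Alon--Tarsi differences over the trees.  The multiplicativity you are appealing to holds when pieces are glued along single cut vertices, which is not the structure here (each tree meets $Y$ along several vertices of a $K_{k-1}$).  A correct argument must either choose $G'$ so that a genuine cut-vertex decomposition is available, or track the Alon--Tarsi polynomial globally across the cross-edges; the endblock-peeling you propose does neither.
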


We also have the following version with asymmetric degree condition on $\B$. 
The point here is that this works for $k \ge 5$.  As we saw in
Theorem~\ref{Uber56} and Corollary~\ref{MinorCor}, the consequence is that we
trade a bit in our size bound for the proof to go through with $k \in
\set{5,6}$.

\begin{lem}
\label{MultipleHighConfigurationEulerLopsided} 
Let $k \ge 5$ and let $G$ be a graph with $Y\subseteq V(G)$.  Now $G$ has an induced
subgraph $G'$ that is $f$-AT where $f(y)=d_{G'}(y)-1$ for $y\in Y$ and
$f(v)=d_{G'}(v)$ for all $v\in V(G'-Y)$ whenever all of the following hold:
	\begin{enumerate}
		\item $K_{k}\not\subseteq G$; and 
		\item the components of $G-Y$ are in $\T_{k}$; and 
		\item $d_{G}(v)\leq k-1$ for all $v\in V(G-Y)$; and 
		\item with $\B \DefinedAs \B_k(V(G-Y), Y)$ we have $d_{\B}(y) \ge 4$ for all $y \in Y$ and $d_{\B}(T) \ge 2$ for all components $T$ of $G-Y$.
	\end{enumerate}
\end{lem}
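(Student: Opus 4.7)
My approach mirrors the proof of Lemma~\ref{MultipleHighConfigurationEuler}, the analogous statement for $k\ge 7$ with the symmetric condition $\delta(\B)\ge 3$. The goal is to construct an induced subgraph $G'\subseteq G$ together with an orientation $D$ of $G'$ satisfying $EE(D)\ne EO(D)$, $d^+_D(y)\le d_{G'}(y)-2$ for every $y\in Y$, and $d^+_D(v)\le d_{G'}(v)-1$ for every $v\in V(G'-Y)$. Once such a $D$ is in hand, Lemmas~\ref{AlonTarsi} and~\ref{Schauz} yield that $G'$ is $f$-AT for the prescribed $f$.

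The orientation is built in two stages. First, for each component $T$ of $G-Y$, since $d_{\B}(T)\ge 2$ I pick two distinct anchor vertices $x_T^1, x_T^2 \in W^k(T)$, each adjacent in $G$ to some vertex of $Y$; these act as the \emph{exits} of $T$. Using the Gallai-tree structure of $T$, I orient its edges so that every non-anchor vertex $v$ satisfies $d^+(v)\le d_T(v)-1$, orienting each $K_{k-1}$ block as a transitive tournament with a single Hamilton-cycle reversal (the standard construction certifying that $K_{k-1}$ itself is $(k-2)$-AT). Second, for each $y\in Y$, I use $d_\B(y)\ge 4$ to select two of its neighboring components $T$ in $\B$ and, for each such $T$, direct into $y$ one edge from a vertex of $W^k(T)$; this guarantees $d^-_D(y)\ge 2$ and hence $d^+_D(y)\le d_G(y)-2$. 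All remaining inter-part edges are directed toward the components, preserving the tree-internal out-degree bound.

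The asymmetric hypothesis $d_\B(y)\ge 4$ (rather than $\ge 3$ as in Lemma~\ref{MultipleHighConfigurationEuler}) is precisely what lets us drop $k$ to $5$ or $6$: since $K_{k-1}$ is smaller, each component contributes less Alon--Tarsi \emph{slack}, and the extra incident edge at each $Y$-vertex recoups this deficit on the $Y$-side of the bipartite auxiliary graph. Symmetrically, the relaxed $d_\B(T)\ge 2$ suffices on the tree side because we have spent more on $Y$. This trade-off also matches the discharging budget in Theorem~\ref{Uber56}, where each $k$-vertex now gives away up to $4\gamma+(k-4)\epsilon$ rather than $3\gamma+(k-3)\epsilon$.

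The main obstacle is proving $EE(D)\ne EO(D)$ globally. I would proceed by a product-style decomposition: a spanning eulerian subdigraph of $D$ restricts to an eulerian subdigraph on each tree component, a subdigraph of the between-edges, and a subdigraph of $D[Y]$. Since distinct components of $G-Y$ interact only through $Y$, the sum over configurations factors largely component-by-component, with each anchored $K_{k-1}$ contributing a nonzero local term. The delicate part is the parity/sign bookkeeping that ensures the component contributions do not cancel, and I expect this to go through via a Combinatorial-Nullstellensatz-style argument in the spirit of~\cite{OreVizing}, with the extra in-edges at each $y$ (from the stronger $Y$-side hypothesis) providing exactly the flexibility needed to prevent collapse when $k\in\{5,6\}$.
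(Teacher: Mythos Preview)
The paper does not prove this lemma at all: Section~5 explicitly states that the three reducibility lemmas (Lemmas~\ref{ConfigurationTypeOneEuler}, \ref{MultipleHighConfigurationEuler}, and \ref{MultipleHighConfigurationEulerLopsided}) ``were proved in~\cite{OreVizing}'' and merely quotes them. So there is no in-paper proof to compare your proposal against.

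Judged on its own, your proposal is not a proof but a plan with the hard part left open. You yourself flag the crucial step---showing $EE(D)\ne EO(D)$ for the constructed orientation---as ``the delicate part'' that you ``expect'' to go through. That is the entire content of an Alon--Tarsi argument; the out-degree bounds you arrange are trivially achievable for many orientations, and it is precisely the nonvanishing of $EE(D)-EO(D)$ that does the work. Your proposed ``product-style decomposition'' is not justified: once you direct some edges from components into $Y$ and others from $Y$ into components, a spanning eulerian subdigraph can thread through several components and several $Y$-vertices simultaneously, so the count does not factor over components. You would need a much more careful bookkeeping (or a genuinely different global argument) to rule out cancellation.

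There are also smaller gaps. You never say what $G'$ is, yet the lemma only promises an \emph{induced subgraph} that is $f$-AT; the actual argument in~\cite{OreVizing} passes to a carefully chosen $G'$, and this choice is where the degree hypotheses on $\B$ are actually spent. Relatedly, you invoke $d_\B(y)\ge 4$ but then only ``select two of its neighboring components'' to feed in-edges to $y$; two in-edges would follow already from $d_\B(y)\ge 2$, so your sketch does not explain where the full strength of the asymmetric hypothesis is used. Finally, $d_\B(T)\ge 2$ guarantees two distinct $Y$-neighbours of $T$ in $\B$, not two distinct anchor vertices in $W^k(T)$, so the selection of $x_T^1,x_T^2$ as stated need not be possible.
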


\bibliographystyle{amsplain}
\bibliography{GraphColoring1}
\end{document}